\theoremstyle{definition}
\newtheorem{theorem}{Theorem}
\newtheorem{definition}{Definition}
\newtheorem{lemma}{Lemma}
\newtheorem{corollary}{Corollary}
\newtheorem{example}{Example}
\newtheorem{remark}{Remark}
\newcommand{\R}{\mathbb{R}}
\begin{document}

\begin{center}
    \begin{Large} Computing Euclidean distance and maximum likelihood retraction maps for constrained optimization\end{Large}\\
    Alexander Heaton and Matthias Himmelmann\\
    \today
\end{center}

\begin{abstract}
    Riemannian optimization uses local methods to solve optimization problems whose constraint set is a smooth manifold. A linear step along some descent direction usually leaves the constraints, and hence \textit{retraction maps} are used to approximate the exponential map and return to the manifold. For many common matrix manifolds, retraction maps are available, with more or less explicit formulas. For implicitly-defined manifolds, suitable retraction maps are difficult to compute. We therefore develop an algorithm which uses homotopy continuation to compute the Euclidean distance retraction for any implicitly-defined submanifold of $\R^n$, and prove convergence results. 
    
    We also consider statistical models as Riemannian submanifolds of the probability simplex with the Fisher metric. Replacing Euclidean distance with maximum likelihood results in a map which we prove is a retraction. In fact, we prove the retraction is second-order; with the Levi-Civita connection associated to the Fisher metric, it approximates geodesics to second-order accuracy.
\end{abstract}

\section{Introduction}\label{introduction}

We consider constrained optimization in the case where the constraint set is an implicitly-defined manifold or real algebraic variety $\mathcal{M} \subset \R^n$. We also consider statistical models $\mathcal{M} \subset \Delta_{n-1} \subset \R^n$ in Section \ref{section:statistical-models}. The algorithms we develop are local, motivated by examples which are too large for global methods like Lagrange multipliers. In Riemannian optimization, the basic idea of local methods is simple. To find the argmin, start somewhere on the manifold and move along a \textit{descent direction} in the tangent space. This (usually) takes you off the manifold, so you must \textit{retract} (see Definition \ref{def:retraction}) back to the manifold, and repeat. Eventually you hope to find $p \in \mathcal{M}$ locally minimizing $f$. Robust techniques and theoretical results accompany a well-chosen Riemannian metric and retraction map \cite{matrixmaifolds, boumal2020intromanifolds, Ring2012}. However, such methods require prior knowledge of a suitable retraction map, ideally with an explicit formula. These are available for many common matrix manifolds, making Riemannian optimization a powerful tool in applications. However, for less common, implicitly-defined manifolds or algebraic varieties, there are no readily available formulas for retractions. Therefore, retraction maps applicable to an arbitrary manifold are desirable.

To construct retraction maps on embedded manifolds or algebraic varieties, intuition suggests retracting to \textit{the closest point} on the manifold. Such maps are called \textit{metric projections} in \cite[p.111]{boumal2020intromanifolds}. 
When we step linearly off a sphere, we can normalize the resulting point. When we leave the manifold of fixed rank matrices, we can apply the singular value decomposition. However, for most implicitly-defined manifolds, retracting to the closest point does not admit an explicit formula or easy solution, and ``it is difficult to compute in general'' \cite[p.164]{boumal2020intromanifolds}. However, if we can compute metric projections, it has been shown \cite{AbsMal2012} that they are second-order retractions \cite[Def. 5.41]{boumal2020intromanifolds}, a desirable property since they match geodesics on the manifold up to second-order. After all, retractions should approximate the exponential map, which is useful provided that retraction is easier to compute in practice.

On the one hand this is an obvious choice; retract to the closest point on the manifold. Upon further reflection an absurdity is noted: 
We begin with a constrained optimization problem on $\mathcal{M}$, to minimize some function $f$. Then we suggest solving another constrained optimization problem on $\mathcal{M}$ repeatedly, at every local step, with the goal of finding the closest point. In particular, we do not have an explicit formula. Have we replaced one hard problem with many hard problems? The absurdity is resolved by use of a small trick.
As we explain in Section \ref{section:euclidean-distance-retraction}, we can use homotopy continuation methods \cite{Bertini, HC.jl, sommesewampler} to easily find the closest point at each local step. We do this by creating a system of equations whose solutions we know, and deforming them via a homotopy. To be clear, without this tool, the metric retraction might be an absurd choice, since we cannot compute it easily. Note that, in contrast to \cite{ContinuationMethodsForRiemannianOptim}, we use homotopy continuation locally, rather than globally. In other words, we only use homotopy continuation to compute the retraction map at each local step.

In Section \ref{section:euclidean-distance-retraction} we describe Algorithm \ref{alg:euclidean-distance-retraction}, which uses homotopy continuation to compute the Euclidean distance retraction (see Theorem \ref{thm:MAIN:euclidean-distance-is-a-retraction}) for any implicitly-defined submanifold of $\R^n$ and we provide convergence guarantees in Theorem \ref{thm:convergence-result} and Corollary \ref{cor:eulersmethodconverges}. In Section \ref{section:statistical-models}, we view statistical models as Riemannian submanifolds of the probability simplex equipped with the Fisher metric. In this setting, we prove Theorem \ref{thm:fisher-retraction}, which shows that ideas from maximum-likelihood estimation may be used to create a second-order retraction applicable to arbitrary statistical models at smooth points. To show second-order, we use geodesics and covariant derivatives with the Levi-Civita connection on $\mathcal{M}$ corresponding to the Fisher metric. Theorem \ref{thm:fisher-retraction} shows that replacing the linear step $p + tv$ from Algorithm \ref{alg:euclidean-distance-retraction} by the quadratic curve $\left( p_i + t v_i + t^2 \frac{v_i^2}{4p_i} \right)_{i=1}^n$ in the usual coordinates on $\R^n$ allows the resulting retraction $R_p(tv)$ to follow geodesics on $\mathcal{M} \subset \Delta_{n-1} \subset \R^n_{>0}$ to second-order accuracy.  Section \ref{section:usage} describes an easy to use Julia package which implements Algorithm \ref{alg:euclidean-distance-retraction} and briefly demonstrates its usage, while Section \ref{section:experiments} demonstrates the algorithm's perfomance.

\textbf{Acknowledgements:} We thank the Fields Institute Thematic Program on Geometric Constraint Systems, Framework Rigidity, and Distance Geometry, which supported the first author as a Fields Postdoctoral Fellow. We thank Myfanwy Evans for suggesting an applied optimization problem during the Fields Program which was both interesting and difficult to solve. This work arose from our attempts to solve that problem.  We also thank Paul Breiding, who suggested using the Euclidean distance equations for the homotopy, rather than another homotopy we had used initially to solve the problem.

\section{Euclidean Distance Retraction by Homotopy}\label{section:euclidean-distance-retraction}

We begin by considering submanifolds $\mathcal{M} \subset \R^n$ with the usual Euclidean metric, and which are implicitly-defined, meaning a smooth function $g:\R^n \to \R^m$ is known such that $\mathcal{M} = g^{-1}(0)$. Since our methods are local, we also consider real algebraic varieties $\mathcal{M} \subset \R^n$, which are smooth manifolds outside the singular locus. Given any smooth function $f:\R^n \to \R$ our main objective is to solve the constrained optimization problem of minimizing $f$ restricted to $\mathcal{M}$.

Given a point $p \in \mathcal{M}$, we can evaluate the Euclidean gradient of $f$ at $p$, obtaining a vector $\nabla f|_p \in \R^n$. Projecting $-\nabla f|_p$ onto the tangent space $T_p \mathcal{M}$, we obtain a \textit{descent direction} $v \in T_p \mathcal{M}$. There are (better) ways to produce descent directions as well, for example \cite[Alg. 6.3 (Riemannian trust-region)]{boumal2020intromanifolds} or \cite[Alg. 5 (Riemannian Newton method)]{matrixmaifolds}. After obtaining a descent direction, the idea is to move from $p \in \mathcal{M}$ to $p+v \notin \mathcal{M}$, and then \textit{retract} back to the manifold in such a way as to approximate the exponential map. This leads us to the following

\begin{definition}\label{def:retraction}
Let $\mathcal{M} \subset \R^n$ be a manifold and $p \in \mathcal{M}$ a point. A \emph{(first-order) retraction} at $p$ is a smooth map $R_p:T_p \mathcal{M} \to \mathcal{M}$ satisfying
\begin{enumerate}
    \item $R_p(0) = p$, and
    \item $d(R_p)(0):T_p \mathcal{M} \to T_p \mathcal{M}$ is the identity map.
\end{enumerate}
Equivalently, for each curve $c:(-\epsilon,\epsilon) \subset \R \to \mathcal{M}$ with $c(t) = R_p(tv)$ for some $v \in T_p \mathcal{M}$ we have $c'(0) = v$ and $c(0)=p$. A \emph{local retraction at p} is a retraction defined in some neighborhood of $0 \in T_p \mathcal{M}$, but satisfying the same two properties above.

A retraction is called \textit{second-order} if, in addition, for all $v \in T_p \mathcal{M}$
\begin{equation*}
    \frac{d^2}{dt^2} R_p(tv) |_{t=0} \in N_p \mathcal{M}.
\end{equation*}
This implies that the curve $R_p(tv)$ matches geodesics on $\mathcal{M}$ up to second order at $t=0$, i.e. $R_p(tv) = \text{geodesic}(t;p,v) + O(t^3)$ as $t\to 0$ \cite[Prop. 3]{AbsMal2012}.
\end{definition}

We note that a retraction map has a different definition in topology, although they are related. Our Definition \ref{def:retraction} is appropriate in the context of \textit{Riemannian optimization}, and matches the definition in the textbook \cite{boumal2020intromanifolds}, for instance. Retractions were first defined in \cite{Adler-newtonsmethod-retraction-definition}.
    In the more general setting of Section \ref{section:statistical-models} we consider Riemannian submanifolds of a Riemannian manifold which is not Euclidean space. In that case, the condition for a second-order retraction must instead be that the covariant derivative of the vector field defined by the velocity of the curve $R_p(tv)$ lies in the normal space at $t=0$ \cite[Def. 8.64 and Eqn. 8.27]{boumal2020intromanifolds}. We delay the additional details until Section \ref{section:statistical-models}.

By a result in \cite{AbsMal2012}, finding the closest point with respect to Euclidean distance defines a local retraction in the sense of Definition \ref{def:retraction} on any submanifold of Euclidean space.

\begin{theorem}[\cite{AbsMal2012}]\label{thm:MAIN:euclidean-distance-is-a-retraction}
Let $\mathcal{M} \subset \R^n$ be a smooth manifold or real algebraic variety. For any smooth point $p \in \mathcal{M}$, define the relation $R_p \subset T_p \mathcal{M} \times \mathcal{M}$ by
\begin{equation*}
    R_p = \{ (v, u) \in \R^n \times \R^n \,\, : \,\, u \in \text{argmin}_{y \in \mathcal{M}} |p + v - y| \}.
\end{equation*}
There exists a neighborhood $U$ of $0$ in $T_p \mathcal{M}$ such that $R_p$ defines a local, second-order retraction. The curve $t \mapsto R_p(tv)$ matches the geodesic for $v$ up to second-order at $p$.
\end{theorem}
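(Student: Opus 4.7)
The plan is to build $R_p$ from the tubular neighborhood theorem and then verify each retraction property by a direct Lagrange multiplier computation. Since the statement concerns a smooth point $p$, I may work in a neighborhood where $\mathcal{M}$ is a smooth embedded submanifold, so the variety case reduces immediately to the manifold case.

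For the construction, I would apply the inverse function theorem to the addition map $F:N\mathcal{M}\to\R^n$, $(q,\nu)\mapsto q+\nu$, on the normal bundle of $\mathcal{M}$. Using the orthogonal decomposition $\R^n=T_p\mathcal{M}\oplus N_p\mathcal{M}$, the differential $dF_{(p,0)}$ is the identity, so $F$ is a local diffeomorphism. Inverting it yields a smooth projection $\pi$ on an open neighborhood $V\ni p$, with $\pi(x)$ the unique Euclidean closest point to $x$ in $\mathcal{M}\cap V$. Choosing $U\subset T_p\mathcal{M}$ with $p+U\subset V$ and setting $R_p(v):=\pi(p+v)$, the relation of the theorem becomes single-valued and smooth on $U$.

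The first-order axioms are then immediate: $R_p(0)=\pi(p)=p$, and since $\pi$ restricts to the identity on $\mathcal{M}\cap V$, its differential at $p$ acts as the identity on $T_p\mathcal{M}$, giving $dR_p(0)=\mathrm{id}_{T_p\mathcal{M}}$. For the second-order condition I would pick a local defining map $g:\R^n\to\R^m$ with $\mathcal{M}=g^{-1}(0)$ and $Dg(p)$ surjective, and write the nearest-point equation as $p+tv-c(t)=Dg(c(t))^T\lambda(t)$, where $c(t):=R_p(tv)$ and $\lambda(t)\in\R^m$ are smooth multipliers with $\lambda(0)=0$. Differentiating once at $t=0$ and using $c'(0)=v\in T_p\mathcal{M}$ gives $Dg(p)^T\lambda'(0)=0$, hence $\lambda'(0)=0$ by injectivity of $Dg(p)^T$. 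A second differentiation at $t=0$ kills all product-rule terms involving $\lambda(0)$ and $\lambda'(0)$, leaving $-c''(0)=Dg(p)^T\lambda''(0)\in N_p\mathcal{M}$, which is exactly the second-order condition in Definition \ref{def:retraction}. The geodesic comparison in the final sentence then follows from the standard Taylor-expansion argument already cited in Definition \ref{def:retraction}.

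The main obstacle I expect is the bookkeeping in this second-order computation: one must verify carefully that the first derivative of $Dg(c(t))^T$ is annihilated by $\lambda(0)=0$ and that the next cross-term vanishes by $\lambda'(0)=0$, both of which are needed to isolate $Dg(p)^T\lambda''(0)$. An intrinsic alternative is to decompose $c''(0)$ against $T_p\mathcal{M}\oplus N_p\mathcal{M}$ directly and show its tangential component vanishes by differentiating $\langle p+tv-c(t),\,w(t)\rangle=0$ along smooth families $w(t)\in T_{c(t)}\mathcal{M}$, but this trades explicit multipliers for equivalent bookkeeping on a moving tangent space and is not obviously cleaner.
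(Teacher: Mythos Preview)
The paper does not actually prove this theorem; it is stated with a citation to \cite{AbsMal2012} and then used as a black box. So there is no in-paper proof to compare against, and your proposal supplies a correct argument where the paper supplies none. The tubular-neighborhood construction is standard, and your Lagrange-multiplier computation for the second-order condition goes through exactly as you anticipate: $\lambda(0)=0$ and $\lambda'(0)=0$ kill the two cross-terms, isolating $-c''(0)=Dg(p)^T\lambda''(0)\in N_p\mathcal{M}$.

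One small point worth tightening: your $\pi$ produces the unique closest point in $\mathcal{M}\cap V$, whereas the relation in the statement uses the global argmin over $\mathcal{M}$. For $v$ small these coincide, but you should say why (any competitor outside a fixed neighborhood of $p$ stays a positive distance from $p$, while $|p+v-p|=|v|\to 0$). This is routine but not automatic for noncompact $\mathcal{M}$.

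It is worth noting that the ``intrinsic alternative'' you sketch at the end---differentiating the orthogonality relation $\langle p+tv-c(t),\,w(t)\rangle=0$ along tangent fields $w(t)\in T_{c(t)}\mathcal{M}$---is precisely the method the paper does use to prove its Fisher-metric analogue, Theorem~\ref{thm:fisher-retraction}. There the metric varies from point to point, so the orthogonality formulation (with covariant derivatives and the compatibility of the Levi--Civita connection) is cleaner than tracking Lagrange multipliers against a changing inner product. In the flat Euclidean case of the present theorem your multiplier route is arguably the more direct of the two, so your instinct to prefer it here is sound.
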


\begin{example}
\label{example:sphereretraction}
Denote the unit sphere in $\R^n$ by $S^{n-1}$. The expression
\begin{equation*}
    R_x(v) = \frac{x+v}{|x+v|}
\end{equation*}
provides a smooth map $R_x:TS^{n-1}\rightarrow S^{n-1}$. As can be easily checked, this formula defines a retraction, called the Euclidean distance retraction, which admits an explicit formula in this simple example. Consequently, we could rewrite
\begin{equation*}
    R_x(v) = \arg\min_{y \in S^{n-1}} |y-(x+v)|,
\end{equation*}
since the argmin is unique in every point except $0$, which never equals $x+v$.
\end{example}

\begin{example}
\label{example:orthogonalgroup}
Choose $\mathcal{M}=O_n$ the orthogonal group. Denote by $qr: GL_n(\R)\rightarrow O_n \times S^+_{upp}(n)$ the QR decomposition of a real, invertible $n\times n$ matrix into an orthogonal matrix and an upper triangular matrix with strictly positive diagonal entries. Notice that GL$_n(\mathbb{R})$ is the complement of a hypersurface in $\mathbb{R}^{n\times n}$, cut out by $\{A\in \mathbb{R}^{n\times n}~|~\det (A)\neq 0\}$. Then, the map 
\[R_X(\xi) = \pi_1(qr(X+\xi)) \]
is a (local) retraction on the orthogonal group's tangent bundle $TO_n$ with $\pi_1$ the projection onto its first argument. A proof can be found in \cite[Ex. 4.1.2]{matrixmaifolds}
\end{example}

In general, the Euclidean distance retraction will not admit an explicit formula, except in special cases like those above. Therefore, it is desirable to develop techniques for computing the Euclidean distance retraction on arbitrary, implicitly-defined manifolds, in case an explicit formula is not available. This will be accomplished via Algorithm \ref{alg:euclidean-distance-retraction} below.

\subsection{Homotopy continuation for the Euclidean distance retraction}\label{section:homotopy-continuation}

In order to compute $R_p(v)$ for the Euclidean distance retraction of Theorem \ref{thm:MAIN:euclidean-distance-is-a-retraction}, we need to solve an optimization problem over the constraint set $\mathcal{M}$, namely, minimizing Euclidean distance. At first glance, this is absurd. We started with an optimization problem (minimizing some other function $f:\R^n \to \R$ restricted to $\mathcal{M}$), and now we suggest taking local steps by solving more optimization problems with the same constraints. However, we have tricks to solve these local optimization problems efficiently, so our proposal is surprisingly useful. In this section we describe how \textit{homotopy continuation} allows us to easily compute the Euclidean distance retraction. 

Given any smooth function $f:\R^n \to \R$ we consider the constrained optimization problem of minimizing $f$ restricted to $\mathcal{M} = g^{-1}(0)$, where $g:\R^n \to \R^m$ is a smooth map and $n-m = \text{dim} (\mathcal{M})$. In what follows, let $dg_x$ denote the $m\times n$ Jacobian matrix of $g$ in the standard basis, evaluated at the point $x$, and let $dg_x^T$ denote the $n \times m$ transpose matrix. Given a point $p \in \mathcal{M}$, and a descent direction $v \in T_p \mathcal{M}$, the idea is to move from $p \in \mathcal{M}$ to $p+v$, which in general will not lie on the manifold $\mathcal{M}$. Subsequently, we want to \textit{retract} back to the manifold in such a way as to approximate the exponential map.
Theorem \ref{thm:MAIN:euclidean-distance-is-a-retraction} shows that computing the Euclidean closest point $ R_p(v) \in \mathcal{M}$ to $p + v \in \R^n$ achieves this goal.

\textbf{Newton's Method.} As suggested in \cite[Ex. 4]{Adler-newtonsmethod-retraction-definition}, one method to find $R_p(v)$ is to apply NM (Newton's method) to the Lagrange multiplier system. We will call this \textit{Algorithm 0}. Set $u = p+v $. Using the variables $(x,\lambda)\in \R^n\times \R^m$ define $G:\R^n \times \R^m \to \R^m \times \R^n$ by 
\begin{equation*}
    G(x,\lambda;u) = (g(x), \, dg_x^T \lambda - u + x).
\end{equation*}
We view $u$ as a parameter, and $(x,\lambda)$ as variables. Then any solution $(x_u,\lambda_u)$ of $G(x,\lambda;u) = 0$ for fixed $u = p+v$ encodes a point $x_u \in \mathcal{M}$ which is first-order critical for minimizing Euclidean distance from $u$ to $\mathcal{M}$.

By applying NM to $G(x,\lambda;u)=0$ one hopes to find the solution $x_u = R_p(v)$. A problem is that NM may diverge, or converge slowly. Consider for example the twisted cubic $\mathcal{C}$, parametrized by $t\mapsto (t,t^2,t^3)$. Figure \ref{fig:Newtonsnormal} shows part of the normal space at $(1,1,1)$ with points where Newton's method takes more than $10$ iterations to converge displayed in red. 

\begin{figure}[h!]
    \centering
    \includegraphics[width=0.75\linewidth]{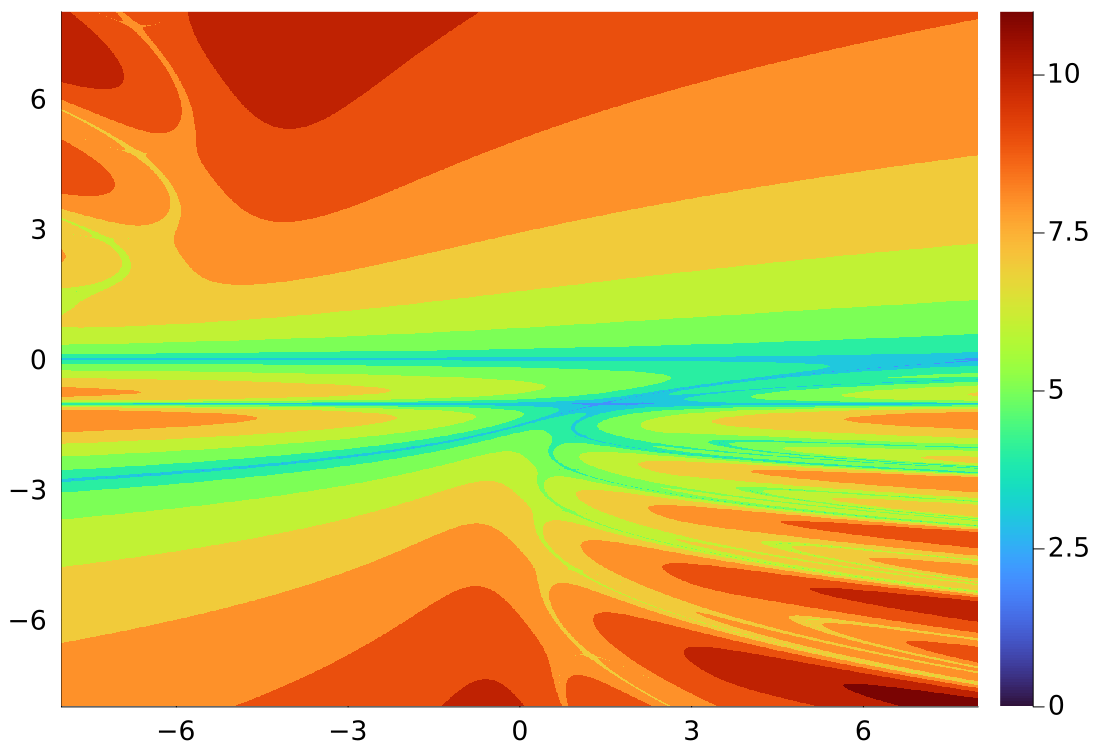}
    \caption{\small Representation of the twisted cubic's normal space at $p=(1,1,1)$ with the picture's origin corresponding to $p$. The colors show how many iterations Newton's method takes to converge.} 
    \label{fig:Newtonsnormal}
\end{figure}

Even if NM converges, the solution $x_u$ may not be $R_p(v)$, but instead some other critical point. NM is not guaranteed to converge to the closest solution to the starting point. These problems disappear if we find an initial guess $(x_0,\lambda_0)$ close enough to the desired solution $G(R_p(v),\lambda_{u}; u=p+v)=0$. Finding such an initial guess will be our goal.

\textbf{Homotopy Continuation}.
Given a system of equations $f(x)=0$ whose solutions are unknown, homotopy continuation seeks to deform the known solutions of another system $g(x)=0$ into the previously unknown solutions of $f(x)=0$. One usually embeds both systems into a parametrized family of systems of equations $h(x;u) = 0$ such that $g(x) = h(x;u_0)$ while $f(x)=h(x;u_1)$. Choose a path $u(t)$ between $u_0$ and $u_1$. Then $\frac{d}{dt}h(x(t); u(t)) = 0$ is a system of ordinary differential equations whose solution $x(t)$ encodes a path between the known solution $g(x(0))=h(x(0);u(0))=0$ and the unknown solution $f(x(1)) = h(x(1);u(1))=0$. These odes are easy to solve numerically since at any $t$ we know the solution $x(t)$ satisfies the system of equations $h(x; u(t))=0$. Therefore, we may apply NM (Newton's method) to correct any errors arising during numerical path tracking. This procedure is called homotopy continuation. Already, there exists efficient software specially designed for solving the ODEs arising in homotopy continuation, see for example \cite{Bertini, HC.jl, Hom4PSArticle, nag-macaulay2, PHCpack}.

For our problem, we view the Lagrange multiplier system $G(x,\lambda; u)=0$ as parametrized by $u$. Taking $u = p$ gives us a system with known solution $(x,\lambda) = (p,0)$. Taking $u=p+v$ gives a system whose solutions include $(x = R_p(v), \lambda^*)$ for some $\lambda^*$ we ignore. One issue that may occur while solving the homotopy ODEs is that for some parameter value $u(t)$ the Jacobian of the system of equations becomes singular. Numerical path tracking may drastically fail if this occurs. For that reason, one usually chooses a path $u(t)$ through complexified parameter space. Since the parameter values with singular Jacobian form a set of complex codimension one, and real codimension two, a random path $u(t)$ in complexified space will avoid the bad parameter values with probability one (see Lemma 7.1.2 of \cite{sommesewampler}). The price paid is that a real-valued solution $x(0)$ may become a complex-valued solution $x(1)$ by the end of the homotopy. See Figure \ref{fig:whenHcGoesWrong} for an example. Since we are only interested in one particular real solution $x=R_p(v)$, this is a problem. 
\begin{figure}[h!]
    \centering
    \includegraphics[width=0.73\linewidth]{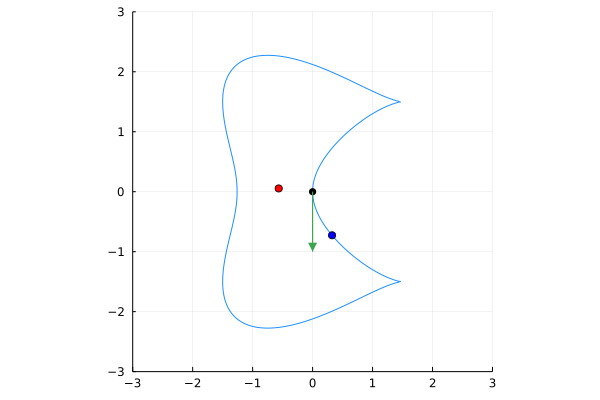}
    \caption{\small Bicuspid curve $\mathcal{C}_a =  \{(x,y)~:~(x^2-a^2)\cdot(x-a)^2+(y^2-a^2)^2=0\}$ for $a=1.5$ with initial point $p=(0,0)$ and tangent direction $v=(0,-1)$. Using a homotopy in complex parameter space computes a complex-valued critical point whose real-part is shown in red, instead of the intended critical point $R_p(v)$ shown in blue.}
    \label{fig:whenHcGoesWrong}
\end{figure}

The solution is given by Lemma \ref{lemma:arbitrary-codimension-submanifolds}, which we prove below, and which is the main theoretical result underlying our method. The Lemma says that if we choose a real-valued parameter path $u(t)$ that stays within a certain distance from $\mathcal{M}$, the relevant Jacobian is always invertible. This Lemma only applies to the Lagrange multiplier system of equations specific to our problem. Therefore, we may confidently apply homotopy continuation using the real numbers, and by the tubular neighborhood theorem we are guaranteed to find the real-valued $R_p(v)$ as our final solution, rather than some other unwanted critical point. All of this works provided our path $u(t)$ between $p$ and $p+v$ stays close enough to $\mathcal{M}$.

\textbf{A simplified version of homotopy continuation.} In order to prove some convergence results, we will first analyze a simplified version of homotopy continuation. Consider a curve $u(t)$ with $t\in [0,1]$, $u(0)=p$ and $u(1)=p+v$. Instead of applying Newton's method only to $G(x,\lambda;u(1))=0$, we partition the interval $[0,1]$ as $0=t_0<t_1<\dots < t_s = 1$ and apply NM to each system $G(x,\lambda;u(t_j))=0$, using the previous solution as the initial guess for the next. This approach is summarized in pseudocode below.

\vspace*{3mm}
\begin{algorithm}[H]
\SetAlgoLined
\SetKwInOut{Input}{Inputs}
\SetKwInOut{Output}{Output}
\DontPrintSemicolon
\Input{A point $p \in \mathcal{M}$, vector $v \in T_p \mathcal{M}$, a path $u(t)$ with $u(0)=p$ and $u(1)=p+v$, and a partition $0=t_0 < t_1 < \cdots < t_s=1$.}
\Output{The Euclidean distance retraction $R_p(v)$.}
\;
Set $x(t_0) = p$ and set $\lambda(t_0) = 0$.\;
\For{$j \in \{1,2,\dots,s\}$}{Apply Newton's method to the system $G(x,\lambda; u(t_j))=0$ with initial point $(x(t_{j-1}),\lambda(t_{j-1}))$, obtaining a solution $(x^*,\lambda^*)$. Set $x(t_j) = x^*$ and $\lambda(t_j) = \lambda^*$.}
 \caption{A simplified Euclidean distance retraction}
 \label{alg:simplified-eucl-dist-retraction}
\end{algorithm}
\vspace*{3mm}

At first glance, Algorithm \ref{alg:simplified-eucl-dist-retraction} seems to worsen the performance of Algorithm $0$. But, if we can ensure that each intermediate solution lies in the next solution's radius of quadratic convergence, this approach may be more efficient and guarantee convergence. This will be made more precise in Theorem \ref{thm:convergence-result} and Corollary \ref{cor:eulersmethodconverges}. Still, it should be stressed that our results use special properties of the Lagrange multiplier system, parametrized by $u$, for finding the Euclidean closest point to $u$ on $\mathcal{M}$. First we prove a technical lemma on which our other results depend.

\begin{lemma}\label{lemma:arbitrary-codimension-submanifolds}
Let $\mathcal{M} = g^{-1}(0)$ for some $C^2$ map $g:\R^n \to \R^m$ and let $u^* \in \R^n$ be given. Let $x^*$ be the closest point on $\mathcal{M}$ to $u^*$ and assume that $dg(x^*)$ has full rank. If
\begin{equation}\label{eqn:inequality-hypersurfaces}
    |u^* - x^*| < \frac{1}{|S|}\sum_{i\in S} \frac{|\nabla g_i(x^*)|}{|d^2g_i(x^*)|} ~~~~~\text{ for } S = \{i\in\{1,\dots , m\} : |d^2g_i(x^*)| \neq 0\}
\end{equation}
then $\frac{\partial G}{\partial(x,\lambda)}(x^*,\lambda^*)$ is invertible. 

In the formula above $|u^* - x^* |$ and $|\nabla g_i(x^*)|$ are the usual norms on $\R^n$, $d^2g_i(x^*)$ is the Hessian of $g_i$ at the point $x^*$, and $|d^2g_i(x^*)|$ denotes the induced matrix norm. If $|d^2g_i(x^*)| = 0$ for all $i$, we interpret the right side of (\ref{eqn:inequality-hypersurfaces}) as $\infty$ and again claim that $\frac{\partial G}{\partial(x,\lambda)}(x^*,\lambda^*)$ is invertible.
\end{lemma}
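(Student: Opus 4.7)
The strategy is to analyze the block structure of the Jacobian directly. A quick computation yields
\[
\frac{\partial G}{\partial(x,\lambda)}(x^*,\lambda^*) \;=\; \begin{pmatrix} dg_{x^*} & 0 \\ I + \sum_{i=1}^m \lambda_i^*\, d^2 g_i(x^*) & dg_{x^*}^T \end{pmatrix},
\]
since $G(x,\lambda;u) = (g(x),\, dg_x^T\lambda - u + x)$. Write $B = dg_{x^*}$ (of full rank $m$ by hypothesis) and $A = I + \sum_i \lambda_i^*\, d^2 g_i(x^*)$.

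Next I would reduce invertibility of this block matrix to a condition on $A$ restricted to $\ker B = T_{x^*}\mathcal{M}$. If $(\xi,\mu)$ lies in the kernel, the top block gives $B\xi = 0$, so $\xi \in T_{x^*}\mathcal{M}$, and pairing the bottom block $A\xi + B^T\mu = 0$ with $\xi$ (and using $B\xi = 0$) gives the crucial identity
\[
|\xi|^2 \;=\; -\sum_{i=1}^m \lambda_i^*\, \xi^T d^2 g_i(x^*)\, \xi.
\]
Once I show this forces $\xi = 0$, the full-rank hypothesis on $B$ makes $B^T$ injective, so $\mu = 0$, establishing invertibility.

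The core of the proof is therefore to derive a contradiction assuming $\xi \ne 0$ (and WLOG $|\xi|=1$). The induced-norm inequality $|\xi^T d^2 g_i(x^*)\xi| \le |d^2 g_i(x^*)|$ together with the identity above yields
\[
1 \;\le\; \sum_{i\in S} |\lambda_i^*|\cdot |d^2 g_i(x^*)|,
\]
where indices outside $S$ drop out because their contributions vanish. The remaining task is to convert the Lagrange relation $u^* - x^* = \sum_i \lambda_i^*\, \nabla g_i(x^*)$ into componentwise estimates of the $|\lambda_i^*|$ in terms of $|u^* - x^*|$ and $|\nabla g_i(x^*)|$, and then combine them with the inequality above to contradict the hypothesis. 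This last step is where the arithmetic mean over $S$ enters: one rearranges the lower bound on $\sum_{i\in S} |\lambda_i^*||d^2 g_i(x^*)|$ against $\sum_{i\in S} |\nabla g_i(x^*)|/|d^2 g_i(x^*)|$ and divides through by $|S|$. The degenerate case $S = \emptyset$ is handled separately: every Hessian $d^2 g_i(x^*)$ vanishes, hence $A = I$, which is trivially invertible on any subspace.

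The main obstacle is the final quantitative step. The crude triangle inequality applied to $u^* - x^* = \sum_i \lambda_i^* \nabla g_i(x^*)$ gives bounds in the wrong direction, so the argument must either use the Lagrange equation componentwise (projecting $u^* - x^*$ onto suitable directions in the normal space $\operatorname{span}\{\nabla g_i(x^*)\}$), or employ a pigeonhole/index-by-index comparison that extracts exactly the averaged bound on the right-hand side of \eqref{eqn:inequality-hypersurfaces}. The full-rank hypothesis on $dg_{x^*}$ is essential here, as it ensures the $\nabla g_i(x^*)$ form a basis of the normal space, making such componentwise estimates meaningful.
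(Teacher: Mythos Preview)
Your setup through the identity $1 \le \sum_{i\in S} |\lambda_i^*|\,|d^2 g_i(x^*)|$ is correct and closely parallels the paper's argument. The ``main obstacle'' you flag --- converting the hypothesis on $|u^*-x^*|$ into usable upper bounds on the individual $|\lambda_i^*|$ --- is real, and neither of your suggested routes (projection, pigeonhole) will close it as stated. The paper's own proof stumbles at exactly the same place: it observes that \emph{if} each $|\lambda_i^*| < 1/(|S|\,|d^2 g_i|)$ then both $|A|<1$ and, via the triangle inequality $|u^*-x^*| \le \sum_i |\lambda_i^*|\,|\nabla g_i|$, the bound (\ref{eqn:inequality-hypersurfaces}) on $|u^*-x^*|$ follows. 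But that implication runs the wrong way for the lemma; one needs $|u^*-x^*|$ small $\Rightarrow$ $|\lambda_i^*|$ small, and the triangle inequality gives only an \emph{upper} bound on $|u^*-x^*|$ in terms of the $|\lambda_i^*|$, never the reverse.

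In fact the hypothesis (\ref{eqn:inequality-hypersurfaces}) is not sufficient, so the gap cannot be closed. Take $n=3$, $m=2$, $g_1(x)=x_1+x_3^2$, $g_2(x)=x_2+100\,x_3^2$, so $\mathcal{M}$ is the curve $t\mapsto(-t^2,-100t^2,t)$. At $x^*=0$ the gradients $(1,0,0)$, $(0,1,0)$ are orthonormal, $|d^2 g_1|=2$, $|d^2 g_2|=200$, and the right side of (\ref{eqn:inequality-hypersurfaces}) equals $\tfrac12\bigl(\tfrac12+\tfrac{1}{200}\bigr)=\tfrac{101}{400}$. For $u^*=(0,-\tfrac{1}{200},0)$ one checks that $x^*=0$ is the unique closest point (the squared distance along the curve is $10001\,t^4+\tfrac{1}{40000}$), whence $\lambda^*=(0,-\tfrac{1}{200})$ and $|u^*-x^*|=\tfrac{1}{200}<\tfrac{101}{400}$. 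Yet $I+\sum_i\lambda_i^* d^2g_i(x^*)=\mathrm{diag}(1,1,0)$, and the pair $(\xi,\mu)=\bigl((0,0,1),(0,0)\bigr)$ lies in the kernel of $\partial G/\partial(x,\lambda)$ at $(x^*,\lambda^*)$. The arithmetic mean of the ratios $|\nabla g_i|/|d^2 g_i|$ in (\ref{eqn:inequality-hypersurfaces}) is simply too generous; your projection idea, carried out with orthogonal gradients, would instead yield the sufficient bound $\bigl(\sum_{i\in S}|d^2 g_i|/|\nabla g_i|\bigr)^{-1}=\tfrac{1}{202}$, which this example does violate.
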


\begin{proof}
Recall $G(x,\lambda;u) = (g(x), dg_x^T \lambda - u + x)$ and so $\partial G/\partial(x,\lambda)$ has a block structure
\begin{equation*}
    \begin{bmatrix}
        d g(x)^T & 0\\
        I_n + \sum_{i=1}^m\lambda_i d^2 g_i(x) & d g(x) 
    \end{bmatrix}.
\end{equation*}
To avoid a proliferation of stars we will sometimes omit the $^*$ in $x^*,\lambda^*,u^*$ in the calculations below.
Since $G(x,\lambda,u) = 0$ then $\lambda^T dg(x) = u - x$ and we obtain after taking norms and applying subadditivity:
\begin{equation}
    \label{eq:first-order-ux}
    |u-x| = |\sum_{i=1}^m \lambda_i \nabla g_i(x) | \leq \sum _{i=1}^m |\lambda_i \nabla g_i(x)| = \sum_{i=1}^m |\lambda_i| \,\, |\nabla g_i(x)|.
\end{equation}
Since $dg_i(x)$ has full rank, the matrix $\partial G/\partial(x,\lambda)$ is invertible if and only if $I_n + \sum_{i=1}^m\lambda_i d^2 g_i(x)$ is invertible. This is the case when all eigenvalues of the matrix $A=\sum_{i=1}^m\lambda_i d^2 g_i(x)$ are different from $-1$. The spectral radius is the maximum modulus of all eigenvalues. Also, the spectral radius is less than or equal to any matrix norm induced from a norm on the vector space. Therefore, assuming that $|A|<1$ implies that $\partial G/\partial(x,\lambda)$ is invertible. If $|dg^2_i(x)|=0$ for all $i$, it holds that $|A|=0$, implying that $\partial G/\partial(x,\lambda)$ is invertible.

Otherwise, since
\[|\sum_{i=1}^m\lambda_i d^2 g_i(x)|\leq \sum_{i\in S} |\lambda_i| \, |d^2g_i(x)| ~~~~~\text{ for } S=\{i\in\{1,\dots , m\} : |d^2g_i(x^*)| \neq 0\},\]
by assuming for each $i$ that $|\lambda_id^2g_i (x)|<\frac{1}{|S|}$ we can ensure that $|A|<1$. Equivalently, $|\lambda_i|<\frac{1}{|S|\cdot |d^2g_i(x)|}$ for all $i\in S$. Inserting this into inequality $(\ref{eq:first-order-ux})$, we see that if
\[|u-x| ~<~ \sum_{i\in S} \frac{|\nabla g_i(x)|}{|S| \,\, |d^2g_i(x)|} ~+~ \sum_{i\notin S} |\lambda_i \nabla g_i(x)|\]
then the matrix $\partial G/\partial(x,\lambda)$ is invertible. In particular, the theorem's assumption implies this inequality, proving the claim.
\end{proof}

In general, Newton's method (NM) may diverge or converge to a critical point other than $R_p(v)$, so Algorithm 1 need not succeed. However, for the problem of computing the retraction $R_p(v)$ we can now prove the following

\begin{theorem}\label{thm:convergence-result}
    Let $g:\R^n \to \R^m$ be three times continuously differentiable. Let $\mathcal{M} = g^{-1}(0) \subset \R^n$ be a smooth manifold with $dg_x$ full rank for every $x \in \mathcal{M}$, and let $u(t)$ be a smooth path in $\R^n$ staying within the tubular neighborhood of $\mathcal{M}$ for which there is a unique closest point. For each $t$, let $x(t)$ be the closest point to $u(t)$. If the path $u(t)$ satisfies
    \begin{equation}\label{eqn:hypersurface-inequality}
        \text{dist}(u(t), \, \mathcal{M}) < \frac{1}{|S|}\sum_{i\in S} \frac{|\nabla g_i(x(t))|}{|d^2g_i(x(t))|},
    \end{equation}
    where $S = \{ i : |d^2g_i(x(t))| \neq 0 \}$,
    then there exists a partition $0=t_0<t_1<\cdots<t_s=1$ such that each of the $s$ applications of Newton's method in Algorithm \ref{alg:euclidean-distance-retraction} converges, and hence Algorithm \ref{alg:euclidean-distance-retraction} converges, and outputs the correct critical point $R_p(v)$.
\end{theorem}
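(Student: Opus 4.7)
The plan is to combine Lemma \ref{lemma:arbitrary-codimension-submanifolds} with a compactness argument and the standard quantitative form of Newton's convergence theorem (Kantorovich/Smale-type). First, the hypothesis (\ref{eqn:hypersurface-inequality}) is exactly the hypothesis of Lemma \ref{lemma:arbitrary-codimension-submanifolds} evaluated at each $t$; therefore, for every $t \in [0,1]$, the Jacobian $\partial G/\partial(x,\lambda)$ is invertible at the unique critical pair $(x(t),\lambda(t))$ that solves $G(x,\lambda;u(t))=0$. Because $g$ is $C^3$ and the tubular neighborhood hypothesis forces uniqueness of the closest point, the implicit function theorem applied to $G$ around each $(x(t),\lambda(t);u(t))$ gives that the solution curve $t \mapsto (x(t),\lambda(t))$ is $C^2$ on $[0,1]$.

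Next I would obtain a \emph{uniform} radius of quadratic convergence for Newton's method. At each $t$, the standard Newton convergence theorem states that there exists a ball of radius
\[
r(t) \;\sim\; \frac{1}{\bigl\|\partial G/\partial(x,\lambda)^{-1}(x(t),\lambda(t);u(t))\bigr\|\cdot L(t)}
\]
around $(x(t),\lambda(t))$ on which Newton's method applied to $G(\cdot,\cdot;u(t))=0$ converges quadratically to $(x(t),\lambda(t))$, where $L(t)$ is a local Lipschitz constant for the second derivative of $G(\cdot,\cdot;u(t))$. Both the operator norm of the inverse Jacobian and the Lipschitz constant $L(t)$ are continuous functions of $t$ on the compact interval $[0,1]$ (continuity of the inverse follows from continuity of the Jacobian and non-vanishing determinant; $L(t)$ is controlled by $\|d^3 g\|$ and $|\lambda(t)|$, both continuous on a compact set). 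Consequently, $r_* := \inf_{t \in [0,1]} r(t) > 0$.

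I would then use uniform continuity of the $C^2$ curve $t \mapsto (x(t),\lambda(t))$ to pick a partition $0 = t_0 < t_1 < \cdots < t_s = 1$ fine enough that
\[
\bigl\|(x(t_j),\lambda(t_j)) - (x(t_{j-1}),\lambda(t_{j-1}))\bigr\| \;<\; r_* \quad \text{for every } j.
\]
Induction on $j$ then finishes the argument: the initial guess at step $j=1$ is $(p,0)=(x(0),\lambda(0))$, so it lies in the ball of radius $r_*$ around $(x(t_1),\lambda(t_1))$; Newton's method returns $(x(t_1),\lambda(t_1))$ exactly. Assuming the iterate after step $j-1$ equals $(x(t_{j-1}),\lambda(t_{j-1}))$, the partition choice places it in the convergence ball around $(x(t_j),\lambda(t_j))$, so Newton's method returns $(x(t_j),\lambda(t_j))$. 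At $j=s$ we obtain $x(1)$, which by construction of $u$ and the tubular neighborhood hypothesis is the Euclidean closest point to $u(1) = p+v$, i.e.\ $R_p(v)$.

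The main obstacle I anticipate is making the uniform lower bound $r_* > 0$ rigorous: one must check that the constants entering the Newton convergence estimate (the inverse-Jacobian norm and the Lipschitz/$C^2$ control of $G$ as a function of $(x,\lambda)$) are genuinely continuous in $t$ and bounded on a neighborhood of the solution curve, not just pointwise. This is where the $C^3$ regularity of $g$, the strict inequality in (\ref{eqn:hypersurface-inequality}) (which provides a uniform quantitative gap away from singularity of the Jacobian via Lemma \ref{lemma:arbitrary-codimension-submanifolds}), and compactness of $[0,1]$ all play essential roles. Everything else is a routine packaging of the Kantorovich theorem with continuity of the solution curve.
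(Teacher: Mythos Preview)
Your approach is essentially identical to the paper's: invoke Lemma \ref{lemma:arbitrary-codimension-submanifolds} for invertibility of $\partial G/\partial(x,\lambda)$ along the solution curve, use compactness of $[0,1]$ to obtain a uniform Newton convergence radius, and then refine the partition so that consecutive solutions lie within that radius and induct. The only differences are cosmetic---the paper derives the quadratic Newton estimate directly from Taylor's theorem rather than citing Kantorovich---and one small slip in your sketch: $L(t)$ should be a bound on the \emph{second} derivative of $G$ (equivalently a Lipschitz constant for the first derivative), not a Lipschitz constant for the second derivative.
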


\begin{proof}
    For each $u(t)$, let $z(t)$ denote the pair $(x(t),\lambda(t))$ of the closest point on $\mathcal{M}$ to $u(t)$ and its associated Lagrange multiplier. We need to show that any partition of $[0,1]$ may be refined until $(x(t_j),\lambda(t_j))$ lies in the region of convergence of NM applied to $G(x,\lambda;u(t_{j+1}))=0$, and so may be used as starting point for the Newton iteration with guaranteed convergence. 
    
    Let $G'$ denote $\partial G/\partial(x,\lambda)$. By Lemma \ref{lemma:arbitrary-codimension-submanifolds}, $G'(z(t);u(t))$ is invertible and hence by the implicit function theorem we can solve locally for $z$ in terms of $u$. Thus the smooth path $u(t)$ gives rise to a smooth path $z(t)$ such that $G(z(t);u(t))=0$ for all $t$. The path $\{ (z(t),u(t)):t \in [0,1]\}$ is compact and every point has a neighborhood $U = B_{\delta(t)}(z(t)) \times B_{\delta(t)}(u(t))$ such that if $(z_0,u_0) \in U$ then $G'(z_0;u_0)^{-1}$ exists, since by Lemma \ref{lemma:arbitrary-codimension-submanifolds} we know $G'(z(t);u(t))^{-1}$ exists and since $G'(z;u)^{-1}$ is continuous in $z$ and $u$. By compactness there exists $r > 0$ such that for all $t$, if $z_0 \in \overline{ B_r(z(t)) } = K_t$ then $G'(z_0;u(t))^{-1}$ exists.

    In preparation for using Taylor's theorem to prove convergence of NM, define
    \begin{equation*}
        M(t) = \sup \frac{1}{2} \,\, |G'(w_0;u(t))^{-1}| \,\, |G''(w_1;u(t))|,
    \end{equation*}
    where the supremum is taken over all $(w_0,w_1) \in K_t \times K_t$. Then $M(t)$ is a continuous function of $t$ and attains its maximum value $M \geq 0$ over $t \in [0,1]$. By refining any initial partition, we may find a partition $0 = t_0 < t_1 < \cdots < t_s = 1$ such that
    \begin{equation*}
        |z(t_i) - z(t_{i+1})| < \min \{ r, 1/M \}.
    \end{equation*}
    If $M\equiv 0$, we interpret $1/M$ as infinity and the minimum becomes $r$. Let $z_u$ denote the closest point and Lagrange multiplier pair corresponding to $u$. By Taylor's theorem, for any $z_n$ we have
    \begin{equation}\label{eqn:Taylor-order-two}
        0 = G(z_u;u) = G(z_n;u) + G'(z_n;u)(z_u - z_n) + \frac{1}{2} G''(\Tilde{z};u)(z_u - z_n, z_u - z_n),
    \end{equation}
    for some $\Tilde{z}$ lying on the line segment between $z_n$ and $z_u$. Newton's method (NM) sets
    \begin{equation}\label{eqn:Newton-iteration}
        z_{n+1} = z_n - G'(z_n;u)^{-1} G(z_n;u).
    \end{equation}
    Use (\ref{eqn:Taylor-order-two}) to replace $G(z_n;u)$ in (\ref{eqn:Newton-iteration}), rearrange, and take norms to obtain
    \begin{align*}
        |z_{n+1} - z_u| &= \frac{1}{2} \, |G'(z_n;u)^{-1} G''(\Tilde{z};u)(z_u-z_n,z_u - z_n) |\\
        & \leq \frac{1}{2} \,\, |G'(z_n;u)^{-1}| \,\, |G''(\Tilde{z};u)| \,\, |z_u-z_n|^2.
    \end{align*}
    Let $e_n = |z_n - z_u|$. Then by induction we have $e_n \leq \frac{1}{M}(M e_0)^{2^n}$ so that if $e_0 < 1/M$ we have $e_n \to 0$ and NM converges. Since $|z(t_i) - z(t_{i+1})| < \min \{ r, 1/M \}$ we see that $z(t_i) \in \overline{B_r(z(t_{i+1}))}$, along with any point on the line segment between, so that using $z(t_i)$ as the initial point for NM applied to $G(z;u(t_{i+1}))=0$ implies that $|z(t_i) - z(t_{i+1})| = e_0 < 1/M$, which completes the proof. 
\end{proof}

\textbf{Predictor-corrector approach.} 
In practice, the procedure above may lead to a rather small discretization of the interval $[0,1]$. A better estimate to initialize NM 
is to use Euler's method \cite[p.22f.]{sommesewampler}, i.e. an approximation of $z_u$ informed by Taylor expansion. First, let us derive the formula for Euler's method to get an error estimate. Assume we are in a point $z=(x,\lambda)$ with parameter $u$. We want the subsequent value $z+\Delta z$ to also solve the polynomial system $G$ at the parameter $u+\Delta u$, so we assume that $G(z+\Delta z,~u+\Delta u)~=~0$.
To obtain the corresponding Taylor expansion, we write
\begin{eqnarray}
\label{eqn:GTaylorExpansion}
G(z+\Delta z,~u+\Delta u) ~=~ G(z,u) + \frac{\partial G}{\partial z}(z,u)\cdot \Delta z ~+~ \frac{\partial G}{\partial u}(z,u) \cdot \Delta u ~+~\mathcal{O}\left(\lvert \lvert \Delta z \lvert \lvert ^2\right).
\end{eqnarray}
With the additional assumption $G(z,u)\approx 0$, i.e. that $z$ is an approximate zero of the system $G$ at parameter $u$, and that $G'$ is invertible at $z$ (see Lemma \ref{lemma:arbitrary-codimension-submanifolds}), Equation (\ref{eqn:GTaylorExpansion}) yields the prediction
\begin{eqnarray}
\label{eqn:deltazexpression}
\Delta z ~ = ~ - \left(\frac{\partial G}{\partial z}(z,u)\right)^{-1} \frac{\partial G}{\partial u}(z,u)\cdot \Delta u.
\end{eqnarray}
Having derived a formula for Euler's method, we can now record the main idea of the corresponding predictor-corrector method in the following algorithm.

\vspace*{3mm}
\begin{algorithm}[H]
\SetAlgoLined
\SetKwInOut{Input}{Inputs}
\SetKwInOut{Output}{Output}
\DontPrintSemicolon
\Input{A point $p \in \mathcal{M}$, vector $v \in T_p \mathcal{M}$, a path $u(t)$ with $u(0)=p$ and $u(1)=p+v$, and a partition $0=t_0 < t_1 < \cdots < t_s=1$.}
\Output{The Euclidean distance retraction $R_p(v)$.}
\;
Set $x(t_0)=x(0)=p$ set $\lambda(t_0)=\lambda(0) = 0$. Let $z=(x,t)$ and $\Delta u_j = u_j-u_{j-1}$.\;
\For{$j \in \{0,1,\dots,s-1\}$}{
Predict $z=z_{j}+\Delta z$ by solving $\frac{\partial G}{\partial z,\lambda}(z_{j};u_{j})\Delta z = -\frac{\partial G}{\partial u}(z_{j};u_{j}) \Delta u_{j+1}$ (Euler).\;
Correct $z$ by solving $G(z,u_{j+1})=0$, yielding $z_{j+1}$ (Newton's method).\;
}
 \caption{Euclidean distance retraction by homotopy}
 \label{alg:euclidean-distance-retraction}
\end{algorithm}
\vspace*{3mm}

In Section \ref{section:usage} we present an implementation of Algorithm \ref{alg:euclidean-distance-retraction} in Julia and show how it performs favorably in comparison with Algorithm 0 on several examples. Nevertheless, the fact that the local truncation error of Euler's method is in $\mathcal{O}(||\Delta z||^2)$ and the error before the application of Algorithm \ref{alg:simplified-eucl-dist-retraction} is in $\mathcal{O}(||z_u-z||)$, already gives an intuition that in practice a predictor-corrector scheme involving Euler's method should perform better than just applying Newton's method. Using Theorem \ref{thm:convergence-result}, we can even prove this hypothesis theoretically.

\begin{corollary}\label{cor:eulersmethodconverges}
    Given the same assumptions as in Theorem \ref{thm:convergence-result}, let $0 = t_0 < t_1 < \dots < t_s=1$ be the partition whose existence is guaranteed by Theorem \ref{thm:convergence-result}. Then each of the $s$ applications of NM after the Euler prediction step in Algorithm \ref{alg:euclidean-distance-retraction} converges, and hence Algorithm \ref{alg:euclidean-distance-retraction} converges, and outputs the correct critical point $R_p(v)$.
    \begin{proof}
    If we Taylor expand $G(z_{u+\Delta u}, u+\Delta u)$ in $z, u$ and assume that $z_{u+\Delta u}$ is a zero of $G$ for the parameter $u+\Delta u$, we get the equation
\begin{eqnarray*}
0 = G(z_{u+\Delta u}, u+\Delta u) &=& \underbrace{G(z,u)}_{=0} + \frac{\partial G(z,u)}{\partial z}\cdot (z_{u+\Delta u}-z) +  \frac{\partial G(z,u)}{\partial u}\cdot \Delta u\\
&~&+ \frac{1}{2}\frac{\partial^2 G(\xi,\mu)}{\partial z^2}\left(z_{u+\Delta u} - z, z_{u+\Delta u} - z\right)
\end{eqnarray*}
for some $(\xi,\mu)$ on the line segment between $(z,u)$ and $(z_{u+\Delta u}, u + \Delta u)$. Since $G(x,\lambda;u) = (g(x), dg_x^T \lambda - u + x)$ then the second derivatives with respect to $u$ are all zero, explaining their absence in the formula above. Since $\partial G/\partial z$ is invertible in the point $z,u$ by assumption, we can insert Equation (\ref{eqn:deltazexpression}) into the above expression and multiply both sides by the inverse of $\frac{\partial G}{\partial z}(z,u)$ from the left to obtain
\begin{eqnarray*}
z_{u+\Delta _u}-z-\Delta z &=& - \frac{1}{2}\left(\frac{\partial G(z,u)}{\partial z}\right)^{-1}\frac{\partial^2 G(\xi,\mu)}{\partial z^2}\left(z_{u+\Delta u} - z, z_{u+\Delta u} - z\right).
\end{eqnarray*}
After taking norms and using the same upper bound $M$ as appeared in the proof of Theorem \ref{thm:convergence-result} we find
\begin{eqnarray*}
\left\lvert \left\lvert z_{u+\Delta _u}-z-\Delta z \right\lvert\right\lvert &\leq&
\frac{1}{2} \left\lvert \left\lvert \frac{\partial G(z,u)}{\partial z}^{-1}\right\lvert \right\lvert \cdot \left\lvert\left\lvert \frac{\partial^2 G(\xi,\mu)}{\partial z^2}\right\lvert \right\lvert \cdot \left\lvert\left\lvert z_{u+\Delta u} - z \right \lvert \right\lvert ^2 \\
&\leq & M ||z_{u+\Delta u}-z||^2.
\end{eqnarray*}
This implies that $z+\Delta z$ lies in the region of quadratic convergence of Newton's method, completing the proof.
    \end{proof}
\end{corollary}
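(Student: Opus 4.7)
The plan is to show that Euler's prediction $\Delta z$, when added to the current approximate solution $z_j$, produces an approximation $z_j+\Delta z$ of the next target $z_{u(t_{j+1})}$ whose error is \emph{quadratic} in the parameter step size, hence strictly smaller than the linear error bound $\|z(t_j)-z(t_{j+1})\|$ from Theorem \ref{thm:convergence-result}, and therefore still inside the region of quadratic convergence of Newton's method at parameter $u(t_{j+1})$.

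First, I would Taylor expand $G(z_{u+\Delta u},\,u+\Delta u)$ around the current point $(z,u)$. The zeroth-order term $G(z,u)$ vanishes (by induction, after the previous Newton correction), the first-order terms in $z$ and $u$ are precisely what the Euler formula (\ref{eqn:deltazexpression}) is designed to cancel, and a crucial simplification is that $G$ is affine in $u$, since $G(x,\lambda;u)=(g(x),\,dg_x^T\lambda-u+x)$. All second derivatives involving $u$ therefore vanish identically, and the remainder reduces to the pure $(z,z)$-Hessian term evaluated at some mean-value point $(\xi,\mu)$ on the segment between $(z,u)$ and $(z_{u+\Delta u},u+\Delta u)$, applied to $(z_{u+\Delta u}-z,\,z_{u+\Delta u}-z)$.

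Next, I would left-multiply by $\bigl(\partial G/\partial z(z,u)\bigr)^{-1}$, which exists by Lemma \ref{lemma:arbitrary-codimension-submanifolds}, solve for $z_{u+\Delta u}-z-\Delta z$, and take norms. Bounding the resulting operator norms by the same supremum $M$ defined in the proof of Theorem \ref{thm:convergence-result} gives
\begin{equation*}
    \|z_{u+\Delta u}-(z+\Delta z)\| \;\leq\; M\,\|z_{u+\Delta u}-z\|^{2}.
\end{equation*}
Since the partition was chosen so that $\|z(t_j)-z(t_{j+1})\|<\min\{r,1/M\}$, the right-hand side is strictly less than $\|z_{u+\Delta u}-z\|$ and in particular strictly less than $1/M$. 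Thus the Euler-predicted point lies in $K_{t_{j+1}}$ and within distance $1/M$ of the exact solution $z_{u(t_{j+1})}$, which is precisely the region of quadratic convergence identified in Theorem \ref{thm:convergence-result}. An induction on $j$ then finishes the argument.

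The main subtlety I anticipate is bookkeeping rather than a deep obstacle: one has to verify that the intermediate point $(\xi,\mu)$ appearing in Taylor's remainder still lies inside the compact set on which $M$ is a uniform upper bound, and that the ``approximate zero'' hypothesis $G(z,u)\approx 0$ used to derive (\ref{eqn:deltazexpression}) holds exactly after the previous corrector step. Both are consequences of the same compactness and continuity arguments already deployed in Theorem \ref{thm:convergence-result}, so at worst one further refinement of the partition is needed to guarantee that the relevant line segments remain inside $K_{t_{j+1}}$.
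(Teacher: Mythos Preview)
Your proposal is correct and follows essentially the same route as the paper: Taylor expand $G(z_{u+\Delta u},u+\Delta u)$ about $(z,u)$, use affinity in $u$ to kill the mixed and pure-$u$ second-order terms, left-multiply by $(\partial G/\partial z)^{-1}$, and bound the resulting quadratic remainder by the same constant $M$ from Theorem~\ref{thm:convergence-result} to place $z+\Delta z$ inside the Newton basin. If anything, you are slightly more careful than the paper in flagging the bookkeeping issue that $(\xi,\mu)$ must remain in the compact set on which $M$ was defined; the paper simply asserts the bound by $M$ without comment.
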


Therefore, using Euler prediction to get the initialization $z+\Delta z$ for Newton's method is at least as good as using the original point $z$. 

\begin{remark}
Runge-Kutta schemes \cite[p.282]{numericalanalysis} are higher-order methods than Euler's method, so often times they improve the predictor's accuracy and consequently allow for a coarser discretization of the unit interval. Moreover, they provide a priori error bounds on the distance of the predictor to the solution path without knowing it. This data can be used to choose step sizes adaptively, ensuring that NM converges quickly at every step \cite{mixedprecisionpathtracking}. For these reasons, higher-order schemes are used in modern homotopy continuation algorithms, much rather than Euler's method \cite{efficientpathtracking, Bertini, HC.jl, nag-macaulay2}. Finally, there are methods to avoid jumping between solution paths, making the path-tracking algorithm even more robust \cite{robusthomotopycontinuation}.
\end{remark}

\color{red}

\color{black}

\section{Statistics: Maximum Likelihood Retraction}
\label{section:statistical-models}

In this section we leave the setting of submanifolds of Euclidean space and instead consider statistical models as submanifolds of a more general Riemannian manifold whose metric varies from point to point. In this context, we will show that if minimizing Euclidean distance is replaced by maximizing likelihood, then we can define analogous retraction maps to optimize functions on statistical models $\mathcal{M}$.
Theorem \ref{thm:fisher-retraction} shows how to make these retractions second-order accurate for the Levi-Civita connection on $\mathcal{M}$.

Consider the smooth manifold structure on $\R_{>0}^n$ induced by viewing the identity map as a global chart. We will use the coordinates denoted $(x_i)$ for this global chart for all of our computations. 
The chart-induced global frame for the cotangent bundle over $\R_{>0}^n$ is denoted $(dx_1,\dots,dx_n)$, while $(\partial_1,\dots,\partial_n)$ will denote the chart-induced global frame for the tangent bundle. Recall, a global frame of a smooth vector bundle is an ordered tuple of global sections whose values at every point yield a basis. The values at a point $x \in \R_{>0}^n$ will be denoted $\partial_i|_x$, so that the vectors $(\partial_1|_x,\dots,\partial_n|_x)$ form a basis for $T_x \R_{>0}^n$.

Let $\Delta_{n-1} \subset \R_{>0}^n \subset \R^n$ be the open probability simplex defined by
\begin{equation*}
    \Delta_{n-1} = \left\{ (x_1,\dots,x_n) \in \R_{>0}^n: \sum_{i=1}^n x_i = 1 \right\}.
\end{equation*}
Given $u \in \Delta_{n-1}$ we define the log-likelihood function $\ell_u:\R_{>0}^n \to \R$ by
\begin{equation*}
    (x_1,\dots,x_n) \mapsto \sum_{i=1}^n u_i \log(x_i) = \log \left( \prod_{i=1}^n x_i^{u_i}\right).
\end{equation*}
A smooth statistical model is a subset $\mathcal{M} \subset \Delta_{n-1}$ which is also a smooth manifold. In contrast, an algebraic statistical model is a subset $\mathcal{M} \subset \Delta_{n-1}$ which is also a real algebraic variety. Given a point $u \in \Delta_{n-1}$ coming from the results of an experiment, maximum likelihood estimation seeks to find the point $p \in \mathcal{M}$ which maximizes the likelihood of observing the results $u$. Such a point also maximizes $\ell_u$ restricted to $\mathcal{M}\subset \Delta_{n-1}$. For more details, see \cite[Ch. 6]{mathematical-statistics-text} or \cite{algebraicstatisticforcompbio}.

A Riemannian metric is a smooth symmetric covariant 2-tensor field that is positive definite at each point. In the global chart on $\R_{>0}^n$, the \textit{Fisher metric} is defined by
\begin{equation*}
    g = \sum_{i,j=1}^n g_{ij} \,\, dx_i \otimes dx_j
\end{equation*}
where $g_{ii} = \frac{1}{x_i}$ and $g_{ij} = 0$ for any $i \neq j$ are smooth functions written in the global chart coordinates. Indeed, it is a Riemannian metric (see \cite[p. 31]{informationgeometry}).
In statistics and information geometry, the Fisher metric arises naturally, being the unique metric invariant under sufficient statistics \cite[Thm 1.2]{informationgeometry}. In maximum likelihood estimation, one chooses the parameters of a model so as to maximize the likelihood of the observed data. Since the Fisher metric appears in the Cram\'er-Rao inequality \cite[Thm 1.3]{informationgeometry}, it is also related to the reliability of the estimator.

Given a function $f$ on a manifold, the vector field $\text{grad } f$ is given in local coordinates \cite[p. 27]{lee2018} by
\begin{equation*}
    \sum_{j=1}^n \left( \sum_{i=1}^n g^{ij} \partial_i f \right) \partial_j
\end{equation*}
where the $g^{ij}$ are smooth functions giving the inverse of the matrix $g_{ij}$ of the metric. For the Fisher metric, $g^{jj} = x_j$ and $g^{ij} = 0$ for $i \neq j$. Since $\partial_j \ell_u = u_j/x_j$ we see that
\begin{equation*}
    \text{grad } \ell_u = \sum_{j=1}^n \left( x_j \frac{u_j}{x_j} \right) \partial_j = \sum_{j=1}^n u_j \, \partial_j.
\end{equation*}
Compare this with the Euclidean gradient, which we denote $\nabla \ell_u$, given by
\begin{equation*}
    \nabla \ell_u = \sum_{j=1}^n \frac{u_j}{x_j} \, \partial_j
\end{equation*}
in the same global frame, but computed with the Euclidean metric $g_{ij} = \delta_{ij}$. The metrics themselves will be denoted by $\langle \cdot, \cdot \rangle^{(f)}_p$ and $\langle \cdot, \cdot \rangle^{(e)}_p$ when they act as inner products on the tangent space $T_p \R^n_{>0}$. Accordingly, $N_p^{(f)} \mathcal{M}$ and $N_p^{(e)} \mathcal{M}$ denote the normal spaces inside $T_p \R^n_{>0}$ with respect to to the Fisher and Euclidean metrics. Viewing $\mathcal{M}$ as a Riemannian submanifold of $\R^n_{>0}$, we will use the Levi-Civita connection (cf. \cite[Ch. 5]{lee2018}) corresponding to the Fisher metric in our computations of covariant derivatives below.

\begin{theorem}\label{thm:fisher-retraction}
Let $\mathcal{M} \subset \Delta_{n-1} \subset \R_{>0}^n$ be a smooth statistical model. For $p \in \mathcal{M}$ and $v = \sum v_i \, \partial_i|_p \in T_p \mathcal{M}$, consider the relation $R_p \subset T_p \mathcal{M} \times \mathcal{M}$ defined by $\{ (v,q^*) : v\in T_p\mathcal{M},~ q^* \in \text{argmax}_{q \in \mathcal{M}} \,\, \ell_{u(p,v)}(q) \}$ where $u(p,v) \in \R^n$ has components $u_i = p_i + v_i + \frac{v_i^2}{4p_i}$. Then there exists a neighborhood $U \subset T_p \mathcal{M}$ of $0$ where the argmax is unique and the relation $R_p|_U$ defines a function
\begin{align*}
    R_p|_U: U &\to \mathcal{M}\\
    v &\mapsto \text{argmax}_{q \in \mathcal{M}} \,\, \ell_{u(p,v)}(q),
\end{align*}
which is a first-order, local retraction on $\mathcal{M}$ at $p$. 

Furthermore, viewing $\mathcal{M}$ as a Riemannian submanifold of $\R_{>0}^n$ with the Fisher metric and its Levi-Civita connection, $R_p|_U$ is a second-order retraction, so that the curve $t \mapsto R_p(tv)$ matches geodesics on $\mathcal{M}$ up to second-order at $t=0$.
\end{theorem}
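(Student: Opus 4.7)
The plan has three parts, all organized around an implicit-function-theorem description of $R_p$. Choose a smooth local parameterization $\phi:W\to\mathcal{M}$ with $W\subset\R^{\dim\mathcal{M}}$ an open neighborhood of $0$ and $\phi(0)=p$. A point $\phi(t)\in\mathcal{M}$ is critical for $\ell_u|_\mathcal{M}$ iff
\begin{equation*}
F_\alpha(t,u):=\sum_{j=1}^n \frac{u_j}{\phi_j(t)}\,\partial_{t_\alpha}\phi_j(t)=0\quad\text{for all }\alpha.
\end{equation*}
At $(t,u)=(0,p)$ this holds because $\phi(W)\subset\Delta_{n-1}$ forces $\sum_j\partial_{t_\alpha}\phi_j(0)=0$. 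Writing $e_\alpha:=\sum_j\partial_{t_\alpha}\phi_j(0)\,\partial_j|_p$ for the resulting basis of $T_p\mathcal{M}$, a short differentiation gives $\partial_{t_\beta}F_\alpha(0,p)=-\langle e_\alpha,e_\beta\rangle^{(f)}_p$, the negative Fisher Gram matrix, which is invertible. The implicit function theorem produces a smooth $t(u)$ near $u=p$ with $t(p)=0$, and I define $R_p(v):=\phi(t(u(p,v)))$ on a neighborhood $U$ of $0\in T_p\mathcal{M}$. Since the ambient Hessian $\partial_i\partial_j\ell_u(q)=-\delta_{ij}u_j/q_j^2$ is negative definite on $\R^n_{>0}$ for $u>0$, its restriction to $T_q\mathcal{M}$ stays negative definite for $(u,q)$ near $(p,p)$, so this critical point is the unique local argmax, giving well-definedness of $R_p|_U$.

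The first-order retraction conditions are then routine: $R_p(0)=\phi(t(p))=p$, and differentiating $F(t(u),u)\equiv 0$ at $u=p$ with the formula $\partial_{u_k}F_\alpha(0,p)=\partial_{t_\alpha}\phi_k(0)/p_k$ and the chain rule through $\frac{d}{ds}|_{s=0}u(p,sv)=v$ reduces $dR_p(0)(v)$ to the identity $v=\sum_\alpha c^\alpha e_\alpha$ for any $v\in T_p\mathcal{M}$, so $dR_p(0)=\text{id}$.

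The heart of the proof is the second-order claim. Computing the Christoffel symbols of the Fisher metric $g_{ij}=\delta_{ij}/x_i$ from $\Gamma^k_{ij}=\tfrac{1}{2}g^{kl}(\partial_ig_{jl}+\partial_jg_{il}-\partial_lg_{ij})$ shows that the only nonzero symbols are $\Gamma^k_{kk}=-1/(2x_k)$. Hence for $q(s):=R_p(sv)$ the ambient Levi-Civita covariant acceleration has components $(\overline{\nabla}_{q'}q')^k(0)=q''_k(0)-v_k^2/(2p_k)$, and by the Gauss formula second-order accuracy amounts to showing that this vector lies in $N_p^{(f)}\mathcal{M}$. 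To this end let $X(s)$ be any smooth vector field along $q$ with $X(s)\in T_{q(s)}\mathcal{M}$. Since $\text{grad}^{(f)}\ell_u|_y=\sum_j u_j\partial_j|_y$, the criticality of $q(s)$ reads
\begin{equation*}
A(s):=\langle\text{grad}^{(f)}\ell_{U(s)}|_{q(s)},X(s)\rangle^{(f)}_{q(s)}=\sum_{j=1}^n\frac{U_j(s)X_j(s)}{q_j(s)}\equiv 0,
\end{equation*}
where $U(s):=u(p,sv)$. The coefficient $1/4$ in $v_j^2/(4p_j)$ is designed precisely so that $U''(0)_j=v_j^2/(2p_j)$. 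Expanding $A''(0)$ by the product rule and substituting $U(0)=q(0)=p$, $U'(0)=q'(0)=v$, $U''(0)_j=v_j^2/(2p_j)$, and $\sum_jX_j(0)=\sum_jX'_j(0)=\sum_jX''_j(0)=0$ (because $X(s)\in T_{q(s)}\Delta_{n-1}$), the mixed terms cancel in pairs, leaving
\begin{equation*}
0=A''(0)=\sum_{j=1}^n\frac{X_j(0)}{p_j}\!\left(\frac{v_j^2}{2p_j}-q''_j(0)\right)=\Big\langle X(0),\,\tfrac{v^2}{2p}-q''(0)\Big\rangle^{(f)}_p,
\end{equation*}
where $v^2/(2p)$ abbreviates the vector with components $v_j^2/(2p_j)$. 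Since $X(0)$ is arbitrary in $T_p\mathcal{M}$, we conclude $q''(0)-v^2/(2p)=\overline{\nabla}_{q'}q'(0)\in N_p^{(f)}\mathcal{M}$, which is the required second-order condition.

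The main obstacle will be the bookkeeping in $A''(0)$: roughly seven product-rule contributions appear and must be shown to collapse in pairs (using the substitutions above) to the single Fisher inner product. The non-obvious design input is the specific coefficient $1/4$ in $v_j^2/(4p_j)$: it is exactly what matches $U''(0)$ to the Fisher Christoffel correction $-v_k^2/(2p_k)$, and any other coefficient would leave a tangential residual and destroy second-order accuracy.
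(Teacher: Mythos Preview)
Your proposal is correct and follows essentially the same strategy as the paper: compute the Fisher Christoffel symbols $\Gamma^k_{kk}=-1/(2x_k)$, express criticality of $q(s)$ as the vanishing of a Fisher inner product between an arbitrary tangent field and the vector with components $U_j(s)-q_j(s)$ (equivalently your $A(s)$), and differentiate twice at $s=0$ to force the ambient covariant acceleration into $N_p^{(f)}\mathcal{M}$. The paper packages the same computation via metric compatibility of the Levi-Civita connection, writing $0=\langle D_tD_tA,B\rangle+2\langle D_tA,D_tB\rangle+\langle A,D_tD_tB\rangle$ and identifying $D_tD_tB(0)=-D_t\dot c(0)$, whereas you expand $A''(0)$ directly in coordinates; the cancellations are identical. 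For well-definedness and smoothness the paper instead uses the Fisher normal-bundle map $E(y,w)=y+w$ as a local diffeomorphism near $(p,0)$, while you invoke the implicit function theorem on the first-order conditions in a chart---both arguments hinge on the same nondegeneracy (your negative Fisher Gram matrix is exactly the invertibility of $dE_{(p,0)}$ in disguise), so neither buys anything the other does not.
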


\begin{proof}
First we check that there exists a neighborhood $U$ of $0 \in T_p \mathcal{M}$ such that $R_p \cap (U \times \mathcal{M})$ actually defines a function $R_p:U \to \mathcal{M}$. Then we will check that this function is a retraction. Consider the normal bundle $N^{(f)}\mathcal{M}$ as a submanifold of $\R^n \times \R^n$ and define $E:N^{(f)}\mathcal{M} \to \R^n$ by $E(y,w) = (y_i + w_i)_{i=1}^n$ in the global coordinates, denoted $y+w$ for short. This is a smooth map, being the restriction of the addition map on $\R^n$. Its differential at $(p,0)$ is bijective. To see this, consider $E$ restricted to the subset $\mathcal{M}_0 = \{ (y,0) : y \in \mathcal{M} \}$. Then $E|_{\mathcal{M}_0}$ is a diffeomorphism of $\mathcal{M}_0$ onto $\mathcal{M}$ and hence its differential at $(p,0)$ is bijective to $T_p \mathcal{M}$. Restricting $E$ to $N_0 = \{(p,w) : w \in N_p^{(f)} \mathcal{M} \}$ gives a map onto the affine subspace $p + N_p^{(f)} \mathcal{M} \subset \R^n$ whose differential is also bijective. Since $\R^n = T_p \R_{>0}^n = T_p \mathcal{M} \oplus N_p^{(f)} \mathcal{M} $ we see that $dE_{(p,0)}$ is bijective. Therefore $E$ is a local diffeomorphism at $(p,0)$ by the inverse function theorem for manifolds \cite[Thm. 4.5]{Lee-manifolds}. In particular, there is a neighborhood $W$ of $(p,0)$ in $N^{(f)}\mathcal{M}$ where $E$ is injective. 

The image of this neighborhood under $E$ is an open neighborhood of $p$ in $\R^n$ for which the maximum likelihood problem has a unique solution. Say $w \in E(W)$ had two points $y, z \in \mathcal{M}$ which maximized the log-likelihood function $\ell_w$ restricted to $\mathcal{M}$. By the usual first-order criteria for local maxima constrained to a subset of $\R^n$, we know that 
the Euclidean gradients $\nabla \ell_w|_y$ and $\nabla \ell_w|_z$ 
lie in the Euclidean normal spaces $N_y^{(e)}\mathcal{M}$ and $N_z^{(e)}\mathcal{M}$, respectively.
We claim that this forces both $\sum_i (w_i - y_i)\partial_i|y$ and $\sum_i(w_i - z_i)\partial_i|z$ to be in  $ N_y^{(f)} \mathcal{M}$ and $ N_z^{(f)} \mathcal{M}$, respectively. To see this we calculate with an arbitary tangent vector $\nu \in T_y \mathcal{M}$,
\begin{align*}
    \langle \nu, w-y \rangle^{(f)}_y &= \langle \nu, w \rangle^{(f)}_y - \langle \nu, y \rangle^{(f)}_y\\
    &= \langle \nu, \nabla \ell_w|_y \rangle^{(e)}_y - \langle \nu, \mathbf{1} \rangle^{(e)}_y.
\end{align*}
The first term is zero by first-order criteria for local maxima, and the second term is zero because $\mathcal{M} \subset \Delta_{n-1}$ is a statistical model, and hence lives in an affine hyperplane whose Euclidean normal vector is the all ones vector, since probabilities sum to one. 
In this case, the symbol $\mathbf{1}$ denotes the all ones vector $\sum_j \partial_j|_y$. 
Therefore $\sum_i (w_i - y_i)\partial_i|y \in N_y^{(f)} \mathcal{M}$. Similarly, $ \sum_i (w_i - z_i) \partial_i|_z \in N_z^{(f)} \mathcal{M}$.
But then $E(y, w-y) = y + w - y = w$ and $E(z,w-z) = z + w - z = w$ contradicts the injectivity of $E$ on $W$. Therefore $E(W)$ is a neighborhood of $p \in \R^n$ for which the maximum likelihood problem has a unique solution. 
Let $u(p,\cdot):T_p \mathcal{M} \to \R^n$ be the map sending a tangent vector $v = \sum v_i \partial_i|_p$ to the tuple $\left(p_i + v_i + \frac{v_i^2}{4p_i} \right)_{i=1}^n$, where all coordinates are in terms of the global chart. Then there exists a neighborhood $U$ of $0 \in T_p \mathcal{M}$ where $u(p,v) \in E(W)$ whenever $v \in U$, and hence the relation $R_p$ defines a function on $U$.

As a result of the previous discussion, for $p \in \mathcal{M}$ and $v \in U \subset T_p \mathcal{M}$ we can define the function
\begin{equation*}
    R_p(v) = \text{argmax}_{q \in \mathcal{M}} \, \ell_{u(p,v)}(q).
\end{equation*}
First we check that $R_p$ is smooth. Let $\pi:N^{(f)}\mathcal{M} \to \mathcal{M}$ be the projection onto $\mathcal{M}$, sending $(y,n) \mapsto y$.
For $v \in U \subset T_p \mathcal{M}$ we have $u(p,v) \in E(W)$. Since $E$ is a diffeomorphism between $W$ and $E(W)$ we know the inverse $E^{-1}$ exists and is smooth on $E(W)$. Since $R_p = \pi \circ E^{-1} \circ u(p,\cdot)$ is the composition of smooth maps, $R_p$ is also smooth.

Now we check that $R_p(0)=p$. But this follows since the argmax of the function $\ell_u$ for any $u \in \Delta_{n-1}$ over the larger domain $\Delta_{n-1}$ is unique and equal to $u$. This fact is well-known, and can be checked by noting that $u$ is the only critical point of $\ell_u$ on $\Delta_{n-1}$ and that the Hessian is negative definite since $\Delta_{n-1} \subset \R^n_{>0}$.
Therefore since $\mathcal{M} \subset \Delta_{n-1}$, when $v=0$ we have a unique argmax at $u(0) = p+0=p \in \mathcal{M} \subset \Delta_{n-1}$ as needed.

Finally, we check the conditions for a first-order and second-order retraction. We don't need the Riemannian structure for first-order, but since we are proving second-order as well we treat both cases uniformly. Define $c(t) = R_p(tv)$ for $t$ in a small enough neighborhood $\mathcal{I} \subset \R$ of zero, such that $\mathcal{I}v \subset U$. Then $c(t)$ is the unique point in $\mathcal{M}$ which maximizes the log-likelihood function $\ell_{u(p,tv)}$ restricted to $\mathcal{M}$. Let $\Dot{c}$ be the vector field on the curve $c$ given in the global chart by $\Dot{c} = \sum_{j=1}^n \Dot{c_j}(t) \, \partial_j$, where $\Dot{c_j}(t)$ denotes the derivative of the component function $c_j(t)$ of the curve $c:\mathcal{I} \to \mathcal{M}$. To show that $R_p$ is a first-order retraction we need to show that $v = \sum_j v_j \partial_j|_p$ is equal to $\Dot{c}|_p = \sum_j \Dot{c_j}(0) \partial_j|_p$. To show that $R_p$ is a second-order retraction we need to show that $D_t \Dot{c} |_p \in N^{(f)}_p \mathcal{M}$ \cite[Def. 8.64 and Eqn 8.27]{boumal2020intromanifolds}. Here, $D_t$ denotes the covariant derivative along the curve $c$, using the Levi-Civita connection for the Fisher metric on $\mathcal{M}$ (See \cite[Thm 4.24]{lee2018}). 

Let $A$ be any vector field on the curve $c$ which is tangent to $\mathcal{M}$, so that $A(t) = A|_{c(t)} \in T_{c(t)}\mathcal{M}$ for all $t \in \mathcal{I}$. By first-order criteria for local maxima we know that $\langle A(t), \nabla \ell_{u(t)} \rangle^{(e)}_{c(t)} = 0$ for all $t \in \mathcal{I}$, since $A$ is everywhere tangent to $\mathcal{M}$. We also know that $\langle A(t), \mathbf{1} \rangle^{(e)}_{c(t)} = 0$ since $\mathcal{M} \subset \Delta_{n-1}$.
By $u(t) = u(p,tv)$ denote the curve in $E(W)$ with components $p_i + tv_i + t^2\frac{v_i^2}{4p_i}$, and let $B = B(t)$ be the vector field along the curve $c$ given by $\sum_j B_j(t) \partial_j|_{c(t)}$ where $B_j(t) = u_j(t) - c_j(t) =  p_j + t v_j + t^2\frac{(v_j)^2}{4p_j} - c_j(t)$.
Then, we have
\begin{align*}
    0 &= \langle A(t), \nabla \ell_{u(t)} \rangle^{(e)}_{c(t)} - \langle A(t), \mathbf{1} \rangle^{(e)}_{c(t)}\\
    &= \langle A(t), \nabla \ell_{u(t)} - \mathbf{1} \rangle^{(e)}_{c(t)}\\
    &= \langle A(t), \sum_{i=1}^n (u_i(t) - c_i(t))\partial_i|_{c(t)} \rangle^{(f)}_{c(t)}\\
    &= \langle A(t), B(t) \rangle^{(f)}_{c(t)}.
\end{align*}
Note that $B(t)$ is the orthogonal projection of $\text{grad } \ell_{u(p,tv)}$ onto $T_{c(t)} \Delta_{n-1}$ along the curve $c$ when using the Fisher metric. 
Applying $\frac{d}{dt}$ and $\frac{d^2}{dt^2}$ using \cite[Prop 5.5]{lee2018} we find
\begin{align}
    0 &= \frac{d}{dt} \langle A, B \rangle^{(f)}_{c(t)} = \langle D_t A, B \rangle^{(f)}_{c(t)} + \langle A, D_t B \rangle^{(f)}_{c(t)} \label{eqn:one-fish} \\
    0 &= \frac{d^2}{dt^2} \langle A, B \rangle^{(f)}_{c(t)} = \langle D_t D_t A, B \rangle^{(f)}_{c(t)} + 2 \langle D_t A, D_t B \rangle^{(f)}_{c(t)} + \langle A, D_t D_t B \rangle^{(f)}_{c(t)}. \label{eqn:two-fish}
\end{align}
Recall (see \cite[p. 102]{lee2018}) the covariant derivative of a vector field along a curve $c$ is given by
\begin{equation}\label{eqn:covariant-derivative-B}
    D_t B(t) = \sum_k \left( \Dot{B}_k(t) + \sum_i \sum_j \Dot{c}_i(t) B_j(t) \Gamma^k_{ij}(c(t)) \right) \partial_k|_{c(t)}
\end{equation}
where $\Gamma^k_{ij}$ are the Christoffel symbols of the metric in the given local frame. For the Fisher metric with the corresponding Levi-Civita connection in the global frame $\partial_k$ induced by the global chart, every $\Gamma^k_{ij}=0$ except $\Gamma^j_{jj}(c(t)) = \frac{-1}{2c_j(t)}$, evaluating at the point $c(t)$ along the curve (cf. \cite[p. 123]{lee2018}). Using this in our calculations, equation (\ref{eqn:covariant-derivative-B}) becomes $D_t B(t) = \sum_k \left( \Dot{B}_k(t) - \frac{\Dot{c}_k(t) B_k(t)}{2c_k(t)} \right) \partial_k|_{c(t)}$, but at $t=0$ this becomes $D_t B(0) = \sum_k \left( v_k - \Dot{c}_k(0) \right) \partial_k|_p$. Then, since $c(0)=p$ we have $B(0) = 0$, and since both $v$ and $\Dot{c}|_p$ are in the tangent space at $p$, equation (\ref{eqn:one-fish}) implies that $v_k = \Dot{c}_k(0)$ and so $v = \Dot{c}|_p$, which completes the proof that $R_p$ is a first-order retraction.

Having proved first-order, both $B(0)$ and $D_t B(0)$ are zero, and the first two terms of equation (\ref{eqn:two-fish}) vanish at $t=0$. A somewhat tedious calculation (which we omit) shows that $D_t D_t B(0) = \sum_k \left( \frac{(v_k)^2}{2p_k} - \Ddot{c}_k(0) \right) \partial_k|_p$. Calculating $D_t \Dot{c}(0)$ using equation \ref{eqn:covariant-derivative-B}, it is noted that $D_t D_t B(0) = - D_t \Dot{c}(0)$. Therefore, equation (\ref{eqn:two-fish}) becomes
\begin{equation*}
    \langle A(0), \, - D_t \Dot{c}(0) \rangle^{(f)}_p = 0.
\end{equation*}
Since this is true for any vector field $A$ which is everywhere tangent to $\mathcal{M}$, and hence for any tangent vector $A(0) \in T_p \mathcal{M}$, we have $D_t \Dot{c}(0) \in N_p^{(f)} \mathcal{M}$, which completes the proof.
\end{proof}

Computing the maximum likelihood retraction can also be done using homotopy continuation on an appropriate Lagrange multiplier system of equations using the log-likelihood function.

\section{Description of the Software}\label{section:usage}

The software package HomotopyOpt.jl has been written in Julia, a high-level, dynamic programming language. In particular, this gives us access to HomotopyContinuation.jl \cite{HC.jl}, a numerical algebraic geometry package. Note that because we use this package, the map $g:\R^n \to \R^m$ defining $\mathcal{M}$ must have polynomial component functions, while the objective function can be anything smooth. Assume now that we want to optimize the smooth function $f(x_1,x_2,x_3) = 2^{({x_2}-1)^2}$ subject to the constraint set 
\[g = \begin{bmatrix} {x_1}^2+{x_2}^2+{x_3}^2-1 \\ x_3-{x_1}^3\end{bmatrix}.\]
To solve this optimization problem with initial point $(0,-1,0)\in \mathcal{M}$ we write:

\begin{lstlisting}
using HomotopyOpt # load package

p = [0,-1.,0]
f = x -> 2^((x[2]-1)^2)
g = x -> [x[1]^2+x[2]^2+x[3]^2-1, x[3]-x[1]^3]
M = ConstraintVariety(g, 3, 1) # M embedded in R^3 with dim 1
res = findminima(p, 1e-5, M, f) # Set the tolerance to 10^(-5)
\end{lstlisting}

Now, we can use Lagrange multipliers to calculate analytically that the minimum of $f$ on $\mathcal{V}(g)$ is at $(0,1,0)$. Indeed, by typing \texttt{res.computedpoints[end]} in the command prompt the final point can be examined, revealing the vector 
\[\texttt{ [-0.0012328615227131664 0.9999992400259441 -1.8738848314414474e-9], }\] which is a solid approximation of the true optimum.

As described in Section \ref{section:euclidean-distance-retraction}, to solve this optimization problem the software takes several local steps (\texttt{maxlocalsteps}), 
running \textit{Riemannian gradient descent} \cite[p. 62]{boumal2020intromanifolds}.
First, it calculates a descent direction $v$ in $T_{x}V$, given by the negative Riemannian gradient obtained by projecting $f$'s gradient onto $T_xV$. Afterwards, the algorithm uses backtracking line search with Wolfe conditions to compute a feasible step size corresponding to the previously chosen descent direction (see \cite{loclipschitzlinesearch, Ring2012}). These conditions provide a lower and an upper bound on admissible step sizes and come alongside convenient theoretical guarantees, such as existance of open intervals of step sizes satisfying the Wolfe conditions \cite{loclipschitzlinesearch}. Furthermore, provided a sequence of step sizes satisfying the Wolfe conditions and well-chosen descent directions such as steepest descent, Zoutendijk's Theorem (cf. \cite{Ring2012}) guarantees convergence for the corresponding Riemannian optimization algorithm.
After having chosen a step size $\alpha$ and a descent direction $v$, the linear steps $p+\alpha v$ are corrected to the variety using the Euclidean Distance Retraction Algorithm \ref{alg:euclidean-distance-retraction}. 

Having completed \texttt{maxlocalsteps} local steps, we check if the first-order criterion for optimality $\text{grad}(x_k)\approx 0$ is satisfied, and also whether the algorithm is stuck in a saddle point or singularity. Our method recognizes saddle points by looking for insufficient decrease in the objective function and singularities by comparing the current Jacobian's (numerical) rank to the previous Jacobian's rank. If it changed, we need to assume that the iteration is currently in or close to a singularity. In either of these cases, we proceed to check whether the current iterate is a minimum by employing the second-order criterion on optimality (see \cite[Prop. 6.3]{boumal2020intromanifolds}). If it is inconclusive or finds that the current point is not optimal, the algorithm continues. In the case of a singularity, additionally we try to resolve the singularity by intersecting the variety with a sphere centered at the current iterate (and possibly additional hyperplanes) and choosing the intersection point with the smallest objective function value to continue the optimization. Otherwise, if the second-order criterion has a positive result, the current point is returned to the user as optimal. Finally, if the algorithm was inconclusive and exceeds the prescribed maximal amount of time \texttt{maxseconds}, the current point is returned with the additional information \texttt{lastpointisminimum=false}.

There are a few additional ways to interact with this package that deserve mentioning. The method \texttt{watch} animates the optimization procedure's steps on the manifold, \texttt{draw} produces a static picture of both the local steps and a global picture and \texttt{setequationsatp!} randomizes the equations at a specific point, so there are only codimension-many left. For this, the dimension is calculated at the specified point by checking the constraint system's Jacobian rank. 

\section{Experiments}
\label{section:experiments}
As discussed in Section \ref{section:homotopy-continuation}, the Euclidean distance retraction in combination with homotopy continuation is well-suited for solving Riemannian optimization problems. Despite having favorable theoretical properties, it remains unclear, why many small Euler steps should be more efficient relative to directly solving the homotopy system $G(x,\lambda; p+v)=0$, which we previously called \textit{Algorithm 0}. This section therefore aims at comparing the performance of using homotopy continuation in Algorithm \ref{alg:euclidean-distance-retraction} with Newton's method (e.g. Algorithm 0, see \cite[Ex. 4]{Adler-newtonsmethod-retraction-definition}).

In the following, three smooth manifolds are considered. To get a reasonable global picture of the algorithms, each of them is intersected with $100$ random hyperplanes to obtain starting points. Both algorithms are then run with each of these samples as initialization, recording the following benchmarks:
\begin{itemize}
    \item Percentage of optimization paths that converge in $10$ seconds.
    \item The average amount of local steps the algorithm takes per starting point. Each execution of the backtracking line search algorithm is counted exactly once.
    \item The average amount of time each method takes per starting point.
\end{itemize}
\noindent
The test instances are available in the \texttt{HomotopyOpt.jl}\footnote{\url{https://github.com/matthiashimmelmann/HomotopyOpt.jl}} repository and were run on a Windows 11 64-Bit machine with an Intel i7-10750H @ 2.6GHz processor and 16GB RAM. The figures in this section are produced with a \texttt{GLMakie} backend \cite{GLMakie} inside the package \texttt{Implicit3DPlotting.jl}\footnote{\url{https://github.com/matthiashimmelmann/Implicit3DPlotting.jl}} designed to visualize implicit space curves and surfaces. 

As a first instance to test our algorithm on, consider the sextic planar curve
\[\mathcal{V}_1 = \{(x,y)\in \mathbb{R}^2 ~:~(x^4+y^4-1)\cdot (x^2+y^2-2)+x^5 y=0\}.\]
The Euclidean distance to the point $(2,0.5)$ is chosen as objective function. The four minima of this optimization problem are displayed in Figure \ref{fig:sextic}.
\begin{figure}
\begin{minipage}[t]{0.478\linewidth}
\centering
\includegraphics[width=0.88\linewidth]{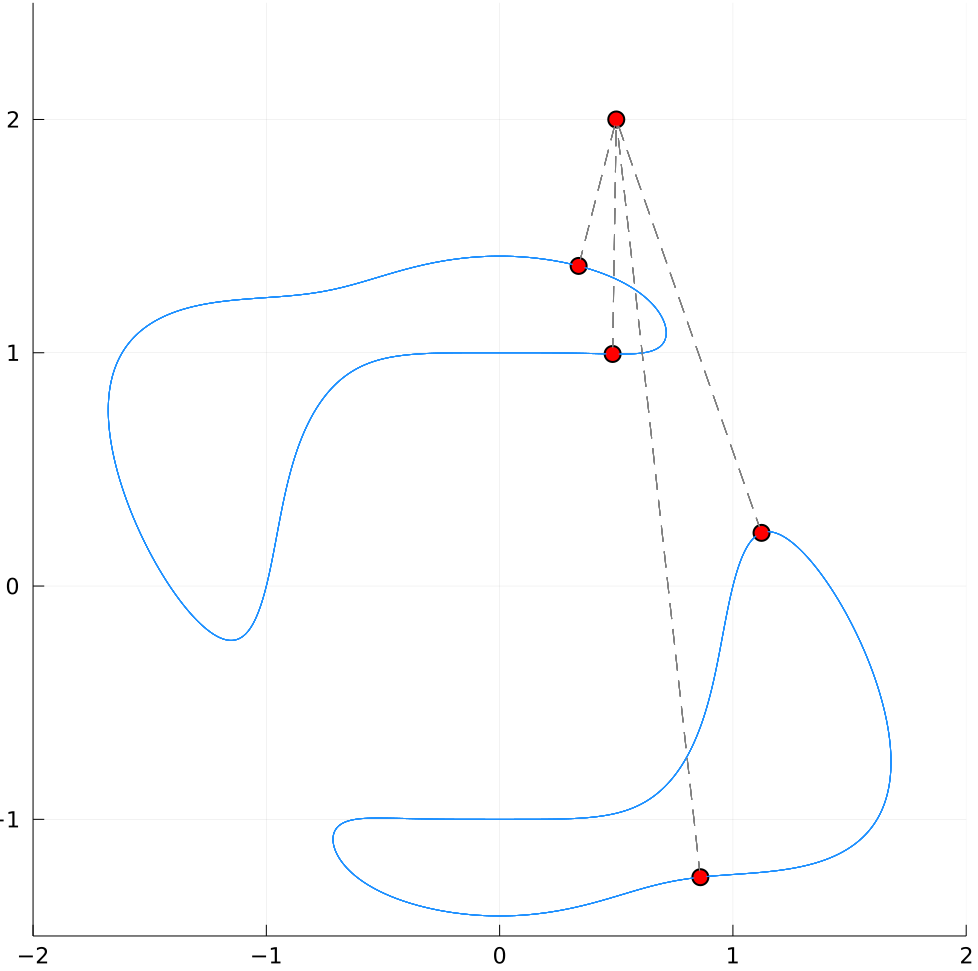}
\label{fig:sextic}
\caption{\small Minima on $\mathcal{V}_1$ w.r.t. the objective function $f_1(x,y)=(x-2)^2+(y-0.5)^2$.}
\end{minipage}
\hspace*{5mm}
\begin{minipage}[t]{0.478\linewidth}
\centering
\includegraphics[width=0.984\linewidth]{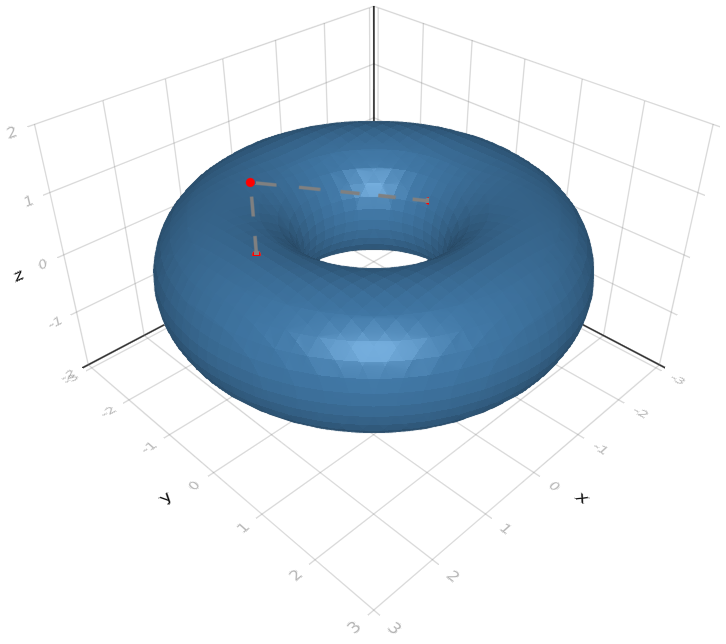}
\label{fig:torus}
\caption{\small Minima on $\mathcal{V}_2(2,1)$ w.r.t. the objective function $f(x,y,z)=(x-2)^2+y^2+(z-2)^2$.}
\end{minipage}
\end{figure}

Table \ref{tab:V1} suggests that homotopy continuation and Newton's method perform comparably with respect to amount of converged paths and average amount of local steps. However, homotopy continuation performs five times better when it comes to average runtime. As discussed in Section \ref{section:euclidean-distance-retraction}, one reason could be slow convergence when the initial point is outside the region of quadratic convergence.
\begin{table}[h]
\centering
\begin{tabular}{ c c c c }
~ & converged & $\varnothing$ steps & $\varnothing$ time \\ 
Homotopy Continuation & $100\%$ & 9.8 & 9.9 ms\\  
Newton's Method & $100\%$ & 8.8 & 42.9 ms
\end{tabular}
\caption{\small The results that $V_1$ produced.}
\label{tab:V1}
\end{table}

For our second example, consider the torus with distance from its center to the tube's central axis $R_1$ and the tube's radius $R_2$:
\[\mathcal{V}_2(R_1,R_2) = \{(x,y,z)\in \mathbb{R}^3 ~:~(x^2+y^2+z^2-{R_1}^2-{R_2}^2)^2+4{R_1}^2z^2-4{R_1}^2{R_2}^2=0\}.\]
When $R_1>R_2>0$, $\mathcal{V}_2(R_1,R_2)$ is a smooth, 2-dimensional manifold. Therefore, we choose $R_1=2$ and $R_2=1$. As objective function, we consider the Euclidean distance to the point $(2,0,2)$. Figure \ref{fig:torus} then depicts the two minima of the corresponding constraint optimization problem.

Again, Table \ref{tab:V2} demonstrates that homotopy continuation and Newton's method perform similarly in the first two categories, while the average runtime of Newton's method is over $3$ times higher than that of homotopy continuation.

\begin{table}[h!]
\centering
\begin{tabular}{ c c c c }
~ & converged & $\varnothing$ steps & $\varnothing$ time \\ 
Homotopy Continuation & $100\%$ & 10.9 & 14.6 ms\\  
Newton's Method & $100\%$ & 10.9 & 44.8 ms
\end{tabular}
\caption{\small The results that $V_2(2,1)$ produced.}
\label{tab:V2}
\end{table}

As a final example, consider the twisted cubic
$\mathcal{V}_3 = \{(x,y,z) \in \mathbb{R}^3 ~:~x^2=y~\wedge ~ x^3=z\};$
a space curve with parametrization $t\mapsto (t,t^2,t^3)$. This time, our objective function of choice is the Euclidean distance from the point $(0,-1,1)$. Figure \ref{fig:twistedcubic} depicts the minimum $(0,0,0)$ of the corresponding constraint optimization problem.

\begin{figure}[h!]
\centering
\includegraphics[width=0.54\linewidth]{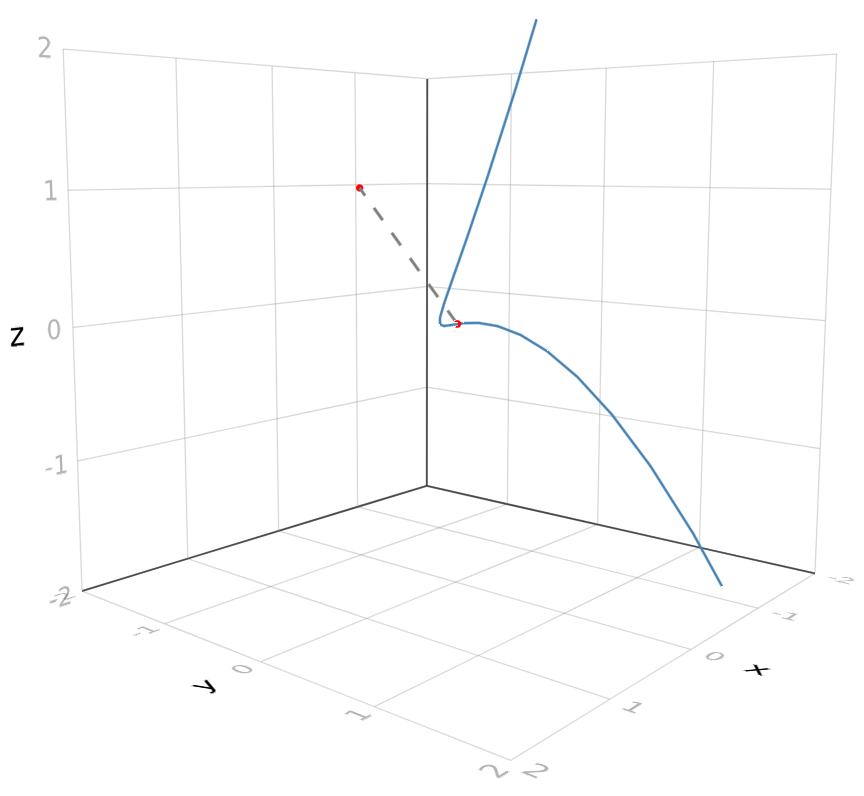}
\label{fig:twistedcubic}
\caption{\small Minima on $\mathcal{V}_3$ w.r.t. the objective function $f(x,y,z)=x^2+(y+1)^2+(z-1)^2$.}
\end{figure}

Similar to the other two examples, Table $\ref{tab:V3}$ illustrates that homotopy continuation again performs significantly better when it comes to average runtime.

\begin{table}[h!]
\centering
\begin{tabular}{ c c c c }
~ & converged & $\varnothing$ steps & $\varnothing$ time \\ 
Homotopy Continuation & $100\%$ & 8.8 & 10.4 ms\\  
Newton's Method & $100\%$ & 9.0 & 27.2 ms
\end{tabular}
\caption{\small The results that $V_3$ produced.}
\label{tab:V3}
\end{table}

To conclude, the theoretical observations about Newton's method from Section \ref{section:euclidean-distance-retraction} were reproducible in experiments. In all instances, Newton's method was significantly slower than the homotopy continuation Algorithm \ref{alg:euclidean-distance-retraction}.

 {\footnotesize\linespread{0.8}
\bibliographystyle{abbrv}
\bibliography{references}

Alexander Heaton, Lawrence University, Appleton, WI, USA

Matthias Himmelmann, University of Potsdam, Potsdam, Germany 
}

\end{document}